\newtheorem{theorem}{Theorem}
\newtheorem{lemma}[theorem]{Lemma}
\newtheorem{proposition}[theorem]{Proposition}
\theoremstyle{definition}
\theoremstyle{remark}
\newtheorem{remark}{Remark}
\newcounter{example}[section]
\newenvironment{example}[1][]{\refstepcounter{example}\par\medskip
   \noindent \textbf{Example~\theexample. #1} \rmfamily}{ \medskip}
\begin{document}
\title{FAMILIES OF PAIRS OF GENUS $2$ CURVES WITH ISOMORPHIC UNPOLARIZED JACOBIANS}
\author{\text{RAGHDA ABDELLATIF}\thanks{Institut für Mathematik,  Humboldt-Universität zu Berlin, Unter den Linden 6, 10099 Berlin, Germany. }\\
\texttt{raghda.abdellatif@hu-berlin.de}
}
\date{ }
\maketitle
\begin{abstract}
We construct three families of pairs of genus $2$ curves over a field $K$, whose Jacobians are isomorphic as unpolarized abelian varieties. Each family is parameterized by an open subset of $\mathbb{P}^1_K$. Our construction is based on a remark in a paper by Howe \cite{Howe}. Later, we describe an obstruction to perusing the same technique over $\mathbb{Q}$.
\end{abstract}

\section{Introduction}
The Torelli says that every non-singular projective algebraic curve is uniquely determined by its principally polarized Jacobian. On the other hand, every principally polarized abelian surface is either the Jacobian of a smooth genus 2 curve or the canonically polarized product of two elliptic curves. The latter statement can, for example, be viewed as a direct consequence of the Matsusaka-Ran criterion \cite{BLComplex}. The multitude of principal polarizations on an abelian surface thus indicates that an unpolarized Jacobian of a curve only provides partial some geometric information of the curve. A pair of curves with isomorphic unpolarized Jacobians can be used to prove that a particular statement about a curve can not be deduced from its unpolarized Jacobian. For example it is known \cite{Howe} that whether a genus $3$ curve is hyperelliptic or a plane quartic cannot be deduced from its unpolarized Jacobian. A shift of focus to arithmetic properties of curves that cannot be read of their unpolarized Jacobian invokes a search for such pairs over number fields or finite fields. With that motivation in mind, explicit families over an arbitrary field $K$ of characteristic different from $2$ were constructed in \cite{Howe}. Each of these families is parameterized by an open subset of $\mathbb{P}^1_K$. In particular, over any number field each family consists of  infinitely many pairs. The aim of this work is to replicate the construction of the first family, as suggested by a remark in the same paper, in order to construct three more families. Roughly speaking, the idea is to start from an isogeny $\psi: E \rightarrow E'$ of elliptic curves over a field $K$. Then the principally polarized abelian variety $A:=E \times E'$ comes equipped with a non-trivial automorphism $\alpha: (P,P') \mapsto (P,P'+\psi(P))$. One then attempts to find a maximal subgroup $G$ of $A[2]$ that is isotropic with respect to the $e_2$-pairing on $A$. With the right conditions on $G$, the quotient $A/G$ is isomorphic to the Jacobian of a curve $C$, whose defining equation can be explicitly written. The curve can be defined over $K$, if the subgroup $G$ is invariant under the action of $\text{Gal}(\overline{K}/K)$. By the $5$-lemma, the quotients $A/G$ and $A/\alpha(G)$ are isomorphic unpolarized abelian varieties. Therefore, we have a chance at constructing a pair of curves with isomorphic unpolarized Jacobians, if the the quotient $A/\alpha(G)$ is isomorphic to the Jacobian of a curve $C'$.
Unfortunately, it is not guaranteed that the curves $C$ and $C'$ are non-isomorphic. For that reason, Igusa invariants were used to check whether the two curves are geometrically distinct. In \cite{Howe}, Howe considers a general family of isogenies, $\psi$, of elliptic curves of degree $2$. The main goal of this work is to exhaust all the possibilities for a similar construction when considering isogenies of higher degrees that can be defined over $\mathbb{Q}$. At first, a general family of degree $3$ isogenies of elliptic curves is considered. The requirement of Galois equivariant of $G$ imposes restrictions on the discriminants of elliptic curves involved; let's agree to refer to such restrictions as  \textit{Galois restrictions}. Unlike the first example in \cite{Howe}, the fact that the degree of $\psi$ is odd imposes a different Galois restriction on the curves' discriminants. In general, for isogenies of even degree, it is required that the discriminant of $E$ differs by a square, in $K$, from that of $E'$. However, for odd degrees, it is required that the discriminant of $E$, and thus also $E'$, is itself a square in $K$. That restriction emerges from the necessity of the Weierstrass points of $E$ to be all defined either over $K$, or over a Galois cubic extension of $K$. The first example, thus constructed, is summarized in the following theorem, whose proof is the subject of section \ref{example1}:
\begin{theorem}
Suppose $\text{char}(K) \neq 2$, and let $t \in K$ be such that 
\begin{equation*}
t(t^2+27)(t^2+243)(t^2+3)(t^4 - 10t^2 + 729)\neq 0.
\end{equation*}
Let $C_t$ be the genus $2$ curve defined by the equation
\begin{align*}
&(t^2+27)(t^2-8t+27)y^2=16t^3x^6+(t^4+16t^3-126t^2+648t-2187)x^4 +\\
&(t^4-8t^3+42t^2-144t-243)x^2-16t.
\end{align*}
The Jacobians of $C_t$ and $C_{-t}$ are isomorphic as unpolarized abelian varieties. Moreover, the curves $C_t$ and $C_{-t}$ are geometrically non-isomorphic unless $\text{char}(K)=13$ and $t^2 + 2t =-12$, or
 $\text{char}(K)=17$ and $t^2=-7$. 
\end{theorem}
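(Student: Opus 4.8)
The plan is to follow the construction sketched in the introduction, making each step explicit for the degree-$3$ case. I would start from a generic family of degree-$3$ isogenies $\psi\colon E \to E'$ of elliptic curves over $K$, parameterized so that the parameter $t$ controls the isogeny. Since the degree is odd, the introduction already tells us that the relevant Galois restriction forces the discriminant of $E$ (hence of $E'$) to be a square in $K$, equivalently that the Weierstrass points of $E$ are defined over $K$ or over a Galois cubic extension of $K$. So the first concrete step is to write $E$ and $E'$ in a normal form respecting this restriction, compute the isogeny $\psi$ explicitly, and thereby fix the automorphism $\alpha\colon(P,P')\mapsto(P,P'+\psi(P))$ of $A=E\times E'$.

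\medskip
\noindent The second step is to locate a maximal isotropic subgroup $G\subset A[2]$ for the $e_2$-pairing that is stable under $\mathrm{Gal}(\overline{K}/K)$, and to verify that $G$ and its image $\alpha(G)$ both give quotients that are Jacobians of smooth genus $2$ curves rather than products of elliptic curves. Here I would use the standard dictionary between maximal isotropic subgroups of $A[2]$ (equivalently, other principal polarizations on the abelian surface up to isomorphism) and genus $2$ curves, and then apply an explicit recipe — of the kind available through the theory of Richelot-type $(2,2)$-isogenies — to write down the defining sextic of the curve $C_t$ whose Jacobian is $A/G$, and correspondingly the curve $C_{-t}$ whose Jacobian is $A/\alpha(G)$. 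The sign flip $t\mapsto -t$ should be the combinatorial shadow of replacing $G$ by $\alpha(G)$; I would check this directly against the displayed equation. The $5$-lemma argument quoted in the introduction then immediately gives that the unpolarized Jacobians of $C_t$ and $C_{-t}$ are isomorphic, which is the first assertion of the theorem.

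\medskip
\noindent The remaining and genuinely delicate part is the geometric non-isomorphism claim. For this I would compute the Igusa--Clebsch invariants of the sextic defining $C_t$ and compare them with those of $C_{-t}$. Two genus $2$ curves are geometrically isomorphic precisely when their Igusa invariants agree in the weighted projective sense, so the task reduces to solving the system of equations asserting that the invariants of $C_t$ and $C_{-t}$ coincide. The expected outcome is that this system forces the exceptional loci singled out in the statement. Concretely, after clearing denominators the non-vanishing conditions on $t(t^2+27)(t^2+243)(t^2+3)(t^4-10t^2+729)$ guarantee smoothness and keep us away from degenerate reductions, and the equality of invariants should collapse to a short list of polynomial conditions on $t$ and $\mathrm{char}(K)$; I anticipate these to be exactly $\mathrm{char}(K)=13$ with $t^2+2t=-12$, and $\mathrm{char}(K)=17$ with $t^2=-7$. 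The main obstacle is managing the size of the Igusa-invariant computation: the invariants are high-degree polynomials in the sextic's coefficients, so the comparison is only tractable after exploiting the symmetry $t\mapsto -t$ to predict which invariant ratios can possibly agree, and after reducing the resulting resultants modulo small primes to pin down the sporadic characteristics. I would therefore organize this step as a symbolic computation whose output is verified rather than derived by hand, and present the exceptional cases as the solution set of the resulting system.
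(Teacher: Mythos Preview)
Your plan is correct and matches the paper's proof essentially step for step: the paper uses the $X_0(3)$ parameterization to write explicit $3$-isogenous curves $E_s, E'_s$, imposes $s=t^2$ from the odd-degree Galois restriction, applies Howe's explicit formula (Proposition~\ref{P10}) in place of your ``Richelot-type recipe'' to produce the sextics $h_t,\tilde h_t$, verifies that $\tilde h_t=h_{-t}$ up to a twist, and then carries out exactly the Igusa-invariant resultant computation you describe (forming weighted combinations $R_2,R_3,R_5\in\mathbb{Z}[t]$, taking $\gcd$ of their pairwise resultants over $\mathbb{Z}$ to isolate the primes $2,3,5,13,17$, and rechecking each characteristic separately). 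The only refinement to your outline is that the bad primes are read off from the integer $\gcd$ of resultants computed over $\mathbb{Z}$ rather than by reducing modulo primes first; otherwise your proposal is the paper's argument.
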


The next step will be to consider a general family of degree $4$ isogenies of elliptic curves. The example thus constructed is a closer replica of the first example in \cite{Howe}. The following theorem, whose proof is detailed in section \ref{example2}, describes that second family.
\begin{theorem}
Suppose $\text{char}(K) \neq 2$, and let $t \in K$ be such that 
\begin{equation*}
(t^2+1)(2t^2+1)(t^2+2)(t^2-1)(t^4+t^2+1)\neq 0.
\end{equation*} Let $C_t$ be the genus $2$ curve defined by the equation
\begin{align*}
& (t^2+1)(t^2-t+1)(t-1)y^2= (4x^2+t)(16t^4x^4+8(2t^4 - 4t^3 + 5t^2 - 4t + 2)x^2+1).
\end{align*}
The Jacobians of $C_t$ and $C_{-t}$ are isomorphic as unpolarized abelian varieties. Moreover, the curves $C_t$ and $C_{-t}$ are geometrically non-isomorphic  unless char$(K)=23$ and $t^2 \in \{10,7 \}$, or char$(K)=47$ and $t^2 \in \{ 26,38\}$.
\end{theorem}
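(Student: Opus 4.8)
The plan is to exhibit both $C_t$ and $C_{-t}$ as quotients of one principally polarized abelian surface by two $e_2$-isotropic subgroups interchanged by the automorphism $\alpha$, exactly as sketched in the introduction. First I would write down a general family of degree-$4$ isogenies $\psi\colon E\to E'$ of elliptic curves over $K$, form $A:=E\times E'$ with its product principal polarization, and record the automorphism $\alpha\colon(P,P')\mapsto(P,P'+\psi(P))$. The next task is to locate a maximal $e_2$-isotropic subgroup $G\subset A[2]=E[2]\times E'[2]$ that is stable under $\mathrm{Gal}(\overline K/K)$. Because the degree of $\psi$ is even, the Galois restriction from the introduction applies: $\mathrm{disc}(E)$ and $\mathrm{disc}(E')$ must differ by a square in $K$. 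This single relation cuts the parameter space down to a rational curve that I can parameterize by $t\in\mathbb{P}^1_K$, and the excluded factor $(t^2+1)(2t^2+1)(t^2+2)(t^2-1)(t^4+t^2+1)$ should be exactly the locus where $E$ or $E'$ degenerates, where $G$ fails to be maximal isotropic, or where the quotient degenerates to a product of elliptic curves rather than a Jacobian.

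With $G$ in hand, I would compute the quotient $A/G$ and produce a Weierstrass model of the genus-$2$ curve $C$ with $\mathrm{Jac}(C)\cong A/G$, using the explicit formulas that recover a genus-$2$ curve from a $(2,2)$-split principally polarized surface (the gluing construction underlying Howe's method, equivalently the theta/Kummer data of $A/G$). Simplifying this model should yield the stated sextic; note that its right-hand side is even in $x$, which reflects the extra involution forced by the split structure and provides a useful consistency check. The heart of this step is to show that replacing $G$ by $\alpha(G)$ has precisely the effect of sending $t\mapsto -t$ in the resulting equation, so that $A/\alpha(G)\cong\mathrm{Jac}(C_{-t})$.

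Granting this, the first assertion is formal. The automorphism $\alpha$ of $A$ carries $G$ isomorphically onto $\alpha(G)$, so in the diagram of short exact sequences $0\to G\to A\to A/G\to 0$ and $0\to\alpha(G)\to A\to A/\alpha(G)\to 0$ the five-lemma produces an isomorphism $A/G\xrightarrow{\sim}A/\alpha(G)$ of abelian varieties; forgetting the two (a priori distinct) induced polarizations, this is an isomorphism $\mathrm{Jac}(C_t)\cong\mathrm{Jac}(C_{-t})$ of unpolarized abelian varieties. For the second assertion I would compute the three absolute Igusa invariants $j_1,j_2,j_3$ of $C_t$ as rational functions of $t$ from the sextic. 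Two genus-$2$ curves are geometrically isomorphic exactly when these triples agree, so $C_t\cong C_{-t}$ over $\overline K$ if and only if $j_i(t)=j_i(-t)$ for $i=1,2,3$. Clearing denominators and taking the numerators of the differences $j_i(t)-j_i(-t)$, over characteristic $0$ and outside the excluded locus the resulting system should have no common root, proving generic geometric non-isomorphism; the exceptional cases then appear in positive characteristic, where computing the resultants or a Gröbner basis over $\mathbb{Z}[t]$ and reducing modulo the primes dividing their content should single out exactly $\mathrm{char}(K)=23$ with $t^2\in\{10,7\}$ and $\mathrm{char}(K)=47$ with $t^2\in\{26,38\}$.

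The main obstacle will be the combination of the quotient computation and the Igusa-invariant analysis. Carrying the description of $A/G$ all the way to the clean stated equation, and then resolving the system $j_i(t)=j_i(-t)$ precisely enough both to prove generic distinctness and to identify the exact exceptional characteristics and values of $t$, amounts to sizeable symbolic computations whose difficulty lies in controlling denominators and in the careful reduction modulo the exceptional primes rather than in any conceptual subtlety.
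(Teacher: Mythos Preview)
Your outline matches the paper's proof essentially step for step: parameterize degree-$4$ isogenies via $X_0(4)$, impose the even-degree Galois restriction to cut down to a one-parameter family in $t$, use Howe's explicit gluing formula (Proposition~\ref{P10} in the paper) to write the quotient as the Jacobian of a genus-$2$ curve, verify that $\alpha(G)$ corresponds to $t\mapsto -t$, and then compare Igusa invariants and reduce modulo the finitely many primes dividing the relevant resultants. The only minor technical divergence is that the paper works with the scaled Igusa invariants $J_2,J_4,J_6,J_{10}$ and forms homogeneous combinations such as $J_4(t)J_2(-t)^2-J_4(-t)J_2(t)^2$, rather than with the absolute invariants $j_1,j_2,j_3$; this avoids trouble at the locus $J_2=0$, which in characteristics $11$ and $37$ actually meets the parameter space and forces the paper to switch to alternative combinations like $J_4^3J_6^2$---a wrinkle you would encounter as well if you used the $j_i$ directly.
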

The third example resembles the construction in the first one, by considering a family of elliptic curves that admit a $K$-rational isogeny of degree $7$. The proof of the following theorem can be found in section \ref{example3}.
\begin{theorem}
Suppose $\text{char}(K) \neq 2$, and let $t \in K$ be such that 
\begin{align*}
&t(t^4+13t^2+49)(t^8 - 6t^6 + 43t^4 - 294t^2 + 2401)\\
&\cdot(t^4 + 5t^2 + 1)(t^2+7)(t^4 + 245t^2 + 2401)\neq 0.
\end{align*}
Let $C_t$ be the genus $2$ curve defined by the equation
\begin{align*}
& (t^4+5t^2+1)(t^2-5t+7)(t^2-3t+7)y^2= 16t^7x^6 + (t^8 + 16t^7 - 130t^6 + 640t^5 - 2289t^4 + \\
&6272t^3 - 13034t^2
    + 19208t - 16807)x^4 + (t^8 - 8t^7 + 38t^6 - 128t^5 + 327t^4 - 640t^3
    +\\
    & 910t^2 - 784t - 343)x^2 - 16t.
\end{align*}
The Jacobians of $C_t$ and $C_{-t}$ are isomorphic as unpolarized abelian varieties. Moreover, the curves $C_t$ and $C_{-t}$ are geometrically non-isomorphic unless $\text{char}(K)=13$ and $t^2  =7$, or
 $\text{char}(K)=17$ and $(t^2 + 3t + 10)(t^2 + 8 t + 1)(t^2 + 9t+ 10)(t^2 + 14t + 10)=0$, or
 $\text{char}(K)=41$ and $(t^2 + t + 34)(t^2 - t+ 34)=0$.
\end{theorem}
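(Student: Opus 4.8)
The plan is to specialize Howe's gluing construction to a rationally parameterized family of degree-$7$ isogenies and then to separate the resulting genus $2$ curves with Igusa invariants, following the pattern of the first theorem. First I would record that the modular curve $X_0(7)$ parameterizing pairs $(E,C)$ with $C$ a cyclic subgroup of order $7$ has genus $0$, hence is birational to $\mathbb{P}^1_K$; this lets me fix Weierstrass models $E_t$ and $E_t' = E_t/C_t$ together with the quotient isogeny $\psi_t : E_t \to E_t'$ depending rationally on $t$, and form the principally polarized surface $A_t = E_t \times E_t'$ with its automorphism $\alpha_t(P,P') = (P, P' + \psi_t(P))$. Because $\deg \psi_t = 7$ is odd, $\psi_t$ restricts to an isomorphism $E_t[2] \cong E_t'[2]$; and, as flagged in the introduction, the odd-degree Galois restriction forces me to take the discriminant of $E_t$ (hence of $E_t'$) to be a square in $K$, so that $E_t[2]$ is either pointwise rational or permuted through a cyclic cubic Galois action.

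Next I would choose the gluing. A maximal isotropic subgroup $G \subset A_t[2]$ for the $e_2$-pairing, which is the orthogonal sum of the Weil pairings of $E_t[2]$ and $E_t'[2]$, is the graph of an isomorphism $\phi : E_t[2] \to E_t'[2]$ respecting those pairings, and the square-discriminant condition is exactly what makes such a Galois-stable $\phi$ available. Descending the product principal polarization to $A_t/G$ and invoking the Matsusaka--Ran criterion recalled in the introduction, I must check that $A_t/G$ is indecomposable, so that it is $\mathrm{Jac}(C_t)$ for a smooth genus $2$ curve $C_t$; Howe's explicit formulas for gluing two elliptic curves along their $2$-torsion then produce the displayed sextic. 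Since $\alpha_t$ is an automorphism of $A_t$ as an abelian variety, it descends to an isomorphism $A_t/G \cong A_t/\alpha_t(G)$, which by the five-lemma is an isomorphism of unpolarized abelian varieties. The decisive feature of the parameterization is that $\alpha_t$ replaces the gluing isomorphism $\phi$ by $\phi + \psi_t|_{E_t[2]}$, and I would verify that the corresponding quotient $A_t/\alpha_t(G)$ is $\mathrm{Jac}(C_{-t})$; granting this, the first assertion follows for every $t$ in the stated open set, the non-vanishing product being precisely the condition that $E_t$, $E_t'$ and the glued quotient all stay smooth and indecomposable.

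For the geometric non-isomorphism I would compute the Igusa--Clebsch invariants of $C_t$ from its sextic model and compare them with those of $C_{-t}$, obtained by $t \mapsto -t$. Two genus $2$ curves are isomorphic over $\overline{K}$ exactly when their invariants define the same point of the weighted projective space, equivalently when all their absolute invariants agree; imposing this proportionality yields a finite list of polynomial relations in $t$ whose common zero locus I would factor. After discarding the components already removed by the non-vanishing hypothesis, what should survive is exactly the sporadic solutions over characteristics $13$, $17$ and $41$ recorded in the statement, each to be confirmed by exhibiting an explicit isomorphism $C_t \cong C_{-t}$.

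The step I expect to be the main obstacle is this last one. Establishing that $\mathrm{Jac}(C_t)$ and $\mathrm{Jac}(C_{-t})$ are isomorphic is structural and robust, but deciding exactly when the two curves themselves coincide requires a heavy resultant and gcd computation, and one must take care to separate genuine geometric isomorphisms from spurious common factors introduced when clearing denominators, as well as to detect the small-characteristic degenerations of the invariant ratios responsible for the characteristic-$13$, $17$ and $41$ loci. I would therefore carry out this elimination in a computer algebra system and verify every sporadic pair individually.
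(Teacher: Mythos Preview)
Your outline matches the paper's proof essentially step for step: parameterize $7$-isogenies via the Hauptmodul $s$ on $X_0(7)$, impose the odd-degree Galois restriction by setting $s=t^2$ so that $\Delta_7(s)=s(s^2+5s+1)^6(s^2+13s+49)^2$ becomes a square, pick the two cyclic-shift graphs $G_{\varphi_1},G_{\varphi_2}\subset A_t[2]$ interchanged by $\alpha_t$, feed the Weierstrass roots into Howe's explicit sextic formula (Proposition~\ref{P10}) to obtain $h_t$ and $\tilde h_t$, observe that $\tilde h_t$ is the $t\mapsto -t$ specialization up to a $K^\times$-square twist, and then run the Igusa-invariant resultant/gcd scheme characteristic by characteristic. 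The only points worth flagging are that the paper singles out the two specific Galois-stable graphs in advance rather than checking indecomposability via Matsusaka--Ran, and that the small-characteristic analysis is lengthier than you might expect (nine primes $2,3,5,7,13,17,19,41,167,571603$ arise from the resultants, most eliminated by refining the $R_i$ or passing to auxiliary weighted combinations $R_{ij}$), but your plan already anticipates exactly this kind of bookkeeping.
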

Unfortunately, this is the furthest one can go over $\mathbb{Q}$ with such constructions, as we will discuss in section \ref{s2o}. The problem is that the imposed Galois restrictions are far too strong over $\mathbb{Q}$ that at most finitely many elliptic curves can satisfy them. Further work can be done by constructing other automorphisms instead of $\alpha$, or when considering other number fields instead of $\mathbb{Q}$. \\

In a forthcoming work we computed the Automorphism groups, over $\mathbb{C}$, of all curves in each of the families. We found  examples of pairs of curves whose Jacobians are isomorphic, and yet they have different automorphism groups over $\mathbb{C}$. In most of the examples that we found, the previous statement still holds over any algebraically closed field of any positive characteristic different from $2$.

\subsection*{Acknowledgments.} I would like to thank Berlin Mathematical School for their financial support during the production of this work. I owe Angela Ortega my immense gratitude for her supervision, mentorship, and the many hours she devoted to discussions of several details of this work. I am also grateful to Thomas Krämer for the co-supervision, thorough revision of the work, and his helpful comments. Finally, I thank Marwan Benyoussef for reviewing and commenting on the script.

   \section{Rational Cyclic Isogenies of Elliptic Curves}
Let $X_0(N)$ be the modular curve of elliptic curves with a cyclic $K$-rational torsion subgroup or order $N$.
It is known that the genus of $X_0(N)$ is zero for $ N \in \{1,2,3,4,5,6,7,8,9,$ $10,12,13,16,18,$ $25 \}$. That allows for a rational parameterization of cyclic rational isogenies in those degrees. A systematic construction of parameterizations of pairs of elliptic curves $(E,E')$ over $\mathbb{C}$, with $E$ cyclically $N$-isogeneous to $E'$, can be found in \cite{Maier}. Those parameterizations are summarized in the following table that corresponds to table 7 in \cite{Maier}. Here $j_N$ and $j'_N$ refer to the $j$-invariant of $N$-isogeneous elliptic curves $E$ and $E'$, respectively. 

\begin{table}[H]\label{t1}
\centering
\caption{Parameterization of cyclic isogenies over $\mathbb{C}$, for all degrees $N$ for which the curve $X_0(N)$ is of genus zero \protect    }
\renewcommand{\arraystretch}{1.5} 
\begin{tabular}{|m{1em}|m{35.7em}|}    \hline
   $N$ & $j_N$ , $j'_N$\\
   \hline
    $2$ & $\frac{(s+16)^3}{s}$ , $\frac{(s+256)^3}{s^2}$\\
     \hline
    $3$ & $\frac{(s+27)(s+3)^3}{s}$ , $\frac{(s+27)(s+243)^3}{s^3}$\\
     \hline
      $4$ & $\frac{(s^2+16s+16)^3}{s(s+16)}$ , $\frac{(s^2+256s+4096)^3}{s^4(s+16)}$\\
     \hline
    $5$ & $\frac{(s^2+10s+5)^3}{s}$, $\frac{(s^2+250s+3125)^3}{s^5}$\\
    \hline
    $6$ & $\frac{(s+6)^3(s^3+18s^3+84s+24)^3}{s(s+8)^3(s+9)^2}$ , $\frac{(s+12)^3(s^3+252s^2+3888s+15552)^3}{s^6(s+8)^2(s+9)^3}$\\
     \hline
    $7$ & $\frac{(s^2+13s+49)(s^2+5s+1)^3}{s}$ , $\frac{(s^2+13s+49)(s^2+245s+2401)^3}{s^7}$\\
     \hline
    $8$ & $\frac{(s^4+16s^3+80s^2+128s+16)^3}{s(s+4)^2(s+8)}$ , $\frac{(s^4+256s^3+5120s^2+32768s+65536)^3}{s^8(s+4)(s+8)^2}$\\
     \hline
    $9$ & $\frac{(s+3)^3(s^3+9s^2+27s+3)^3}{s(s^2+9s+27)}$ , $\frac{(s+9)^3(s^3+243s^2+2187s+6561)^3}{s^9(s^2+9s+27)}$\\
     \hline
    $10$ & $\frac{(s^6+20s^5+160s^4+640s^3+1280s^2+1040s+80)^3}{s(s+4)^4(s+5)^2}$ , $\frac{(s^6+260s^5+6400s^4+64000s^3+320000s^2+800000s+800000)^3}{s^{10}(s+4)^2(s+5)^5}$\\
     \hline
    $12$ & $\frac{(s^2+6s+6)^3(s^6+18s^5+126s^4+432s^3+732s^2+504s+24)^3}{s(s+2)^3(s+3)^4(s+4)^3(s+6)}$ , \\
 &   $\frac{(s^2+12s+24)3(s^6+252s^5+4392s^4+31104s^3+108864s^2+186624s+124416)^3}{s^{12}(s+2)(s+3)^3(s+4)^4(s+6)3}$\\
     \hline
    $13$ & $\frac{(s^2+5s+13)(s^4+7s^3+20s^2+19s+1)^3}{s}$ , $\frac{(s^2+5s+13)(s^4+247s^3+3380s^2+15379s+28561)^3}{s^{13}}$\\
     \hline
    $16$ & $\frac{(s^8+16s^7+112s^6+448s^5+1104s^4+1664s^3+1408s^2+512s+16)^3}{s(s+2)^4(s+4)(s^2+4s+8)}$ ,\\
  &   $\frac{(s^8+256s^7+5632s^6+53248s^5+282624s^4+917504s^3+1835008s^2+2097152s+1048576)^3}{s^{16}(s+2)(s+4)^4(s2+4s+8)}$\\
     \hline
    $18$ & $\frac{(s^3+6s^2+12s+6)^3(s^9+18s^8+144s^7+666s^6+1944s^5+3672s^4+4404s^3+3096s^2+1008s+24)^3}{s(s+2)^9(s+3)^2(s^2+3s+3)^2(s^2+6s+12)}$ , \\
&    $\frac{(s^3+12s^2+36s+36)^3(s^9+252s^8+4644s^7+39636s^6+198288s^5+629856s^4+1294704s^3+1679616s^2+1259712s+419904)^3}{s^{18}(s+2)^2(s+3)^9(s^2+3s+3)(s^2+6s+12)^2}$\\
     \hline
    $25$ & $\frac{(s^{10}+10s^9+55s^8+200s^7+525s^6+1010s^5+1425s^4+1400s^3+875s^2+250s+5)^3}{s(s^4+5s^3+15s^2+25s+25)}$ , \\
  &  $\frac{(s^{10}+250s^9+4375s^8+35000s^7+178125s^6+631250s^5+1640625s^4+3125000s^3
    +4296875s^2+3906250s+1953125)^3}{s^{25}(s^4+5s^3+15s^2+25s+25)}$\\
    \hline
   \end{tabular}
   \end{table}
For each $N$, the roots of the denominator correspond to a cusp on $X_0(N)$, whose width is determined by the multiplicity of the root. On the other hand, the roots of each non-cubed factor of $j_N(s)$, along with the roots of each non-squared factors on $j_N(s)-12^3$ constitute the elliptic points on $X_0(N)$. Henceforth, we will only consider the smooth locus on $X_0(N)$, away from cusps and elliptic points.

\section{The basic Construction}\label{s21}
 
Constructing a pair of genus $2$ curves with isomorphic unpolarized Jacobians amounts to constructing a pair of indecomposable principal polarizations on an abelian surface: Let $(A,\lambda)$ be a principally polarized abelian surface over a field $K$. For a positive integer $n$, let $G$ is a subgroup of the $n$-torsion subgroup $A[n]$. Suppose that $G$ is maximally isotropic with respect to the $\lambda$-$e_n$-pairing on $A[n]$ associated to the polarization $\lambda$, see \cite{Mum}. Then the polarization $n\lambda$ on $A$, descends to a principal polarization $\mu$ on the quotient variety $B:=A/G$. Suppose further that $A$ has an automorphism $\alpha$ such that $G':=\alpha(G)$ is also a maximally isotropic subgroup with respect to the $\lambda$-$e_n$-pairing. Let $(B',\mu')$ be the principally polarized abelian surface obtained from $G'$ as described above. The abelian surfaces $B$ and $B'$ are isomorphic due to the following

\begin{proposition}[\cite{Howe}]\label{p4}
For a morphism $\delta$ of abelian variety, we denote its dual morphism by $\hat{\delta} $. The automorphism $\alpha$ of $A$ provides an isomorphism: $\beta_0:B \rightarrow B'$. Identify $B'$ with $B$ via the isomorphism $\beta_0$, then $\mu'=\hat{\beta}\mu\beta$, where $\beta$ is the image of $\alpha$ in $(\text{End}B)\otimes\mathbb{Q}$.
\end{proposition}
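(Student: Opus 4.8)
The plan is to reduce the proposition to a short formal computation in the isogeny category, after making precise the two ingredients furnished by the hypotheses: the descent of $n\lambda$ and the isomorphism induced by $\alpha$ on the quotients. Write $\pi\colon A\to B$ and $\pi'\colon A\to B'$ for the two quotient isogenies, with kernels $G$ and $G'=\alpha(G)$ respectively. Since $G$ and $G'$ are maximally isotropic, the descended polarizations are characterized by
\begin{equation*}
\widehat{\pi}\,\mu\,\pi=n\lambda,\qquad \widehat{\pi'}\,\mu'\,\pi'=n\lambda;
\end{equation*}
these relations, together with the fact that $\mu$ and $\mu'$ are genuine \emph{principal} polarizations, are exactly what the theory of maximally isotropic subgroups provides, and I would take them as given with a reference to \cite{Mum}.

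First I would produce the isomorphism $\beta_0$. Because $\alpha$ is an automorphism of $A$ carrying $G=\ker\pi$ onto $G'=\ker\pi'$, the composite $\pi'\alpha$ kills $G$ and hence factors uniquely through $B=A/G$, giving a morphism $\beta_0\colon B\to B'$ with $\pi'\alpha=\beta_0\pi$. Applying the same construction to $\alpha^{-1}$ yields a two-sided inverse, so $\beta_0$ is an isomorphism; this settles the first assertion.

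For the second assertion I would pass to the isogeny category, in which $\pi$ and $\pi'$ become invertible, and set $\beta:=\pi\alpha\pi^{-1}$, the image of $\alpha$ under the isomorphism $\operatorname{End}^0(A)\cong\operatorname{End}^0(B)$ induced by $\pi$; this is the $\beta$ of the statement. The relation $\pi'\alpha=\beta_0\pi$ rewrites as $\beta_0=\pi'\alpha\pi^{-1}$. Identifying $B'$ with $B$ along $\beta_0$ replaces $\mu'$ by the pullback $\widehat{\beta_0}\,\mu'\,\beta_0$, so what must be shown is the identity $\widehat{\beta}\,\mu\,\beta=\widehat{\beta_0}\,\mu'\,\beta_0$. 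Using $\widehat{fg}=\widehat{g}\,\widehat{f}$ one finds $\widehat{\beta}=\widehat{\pi^{-1}}\,\widehat{\alpha}\,\widehat{\pi}$ and $\widehat{\beta_0}=\widehat{\pi^{-1}}\,\widehat{\alpha}\,\widehat{\pi'}$, whence the two descent relations give
\begin{equation*}
\widehat{\beta}\,\mu\,\beta=\widehat{\pi^{-1}}\,\widehat{\alpha}\,(\widehat{\pi}\,\mu\,\pi)\,\alpha\,\pi^{-1}=n\,\widehat{\pi^{-1}}\,\widehat{\alpha}\,\lambda\,\alpha\,\pi^{-1},
\end{equation*}
and likewise $\widehat{\beta_0}\,\mu'\,\beta_0=\widehat{\pi^{-1}}\,\widehat{\alpha}\,(\widehat{\pi'}\,\mu'\,\pi')\,\alpha\,\pi^{-1}=n\,\widehat{\pi^{-1}}\,\widehat{\alpha}\,\lambda\,\alpha\,\pi^{-1}$. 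The two expressions agree, which is precisely the claim.

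I expect the only real obstacle to be bookkeeping rather than mathematics: one must track the direction of every arrow and the order reversal under dualization, and check that the identity proved a priori in $\operatorname{End}^0(B)$ holds on the nose — which it does, since the common value $\widehat{\beta}\,\mu\,\beta$ equals the honest polarization $\mu'$ transported by $\beta_0$. The genuinely substantive input, namely that maximal isotropy of $G$ and $G'$ forces $\mu$ and $\mu'$ to be principal polarizations descending $n\lambda$, is supplied by the hypotheses and \cite{Mum}, so no additional argument is required there.
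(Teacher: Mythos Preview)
Your proof is correct and follows essentially the same route as the paper's: both establish $\beta_0$ by factoring $\pi'\alpha$ through $B$, set $\beta=\pi\alpha\pi^{-1}$ in $\operatorname{End}^0(B)$, and deduce $\mu'=\widehat{\beta}\mu\beta$ from the two descent relations $\widehat{\pi}\mu\pi=n\lambda$ and $\widehat{\pi'}\mu'\pi'=n\lambda$. The only cosmetic difference is that the paper identifies $B'$ with $B$ at the outset (so the second quotient map becomes $\pi\alpha^{-1}$) rather than keeping them separate and comparing $\widehat{\beta}\mu\beta$ with $\widehat{\beta_0}\mu'\beta_0$ as you do; the underlying computation is identical.
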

\begin{proof}
Consider the following diagram with exact rows and commutative solid arrows.
\[\begin{tikzcd}
	0 & G & A & B & 0 \\
	0 & {G'} & A & {B'} & 0 \\
	\\
	&& {}
	\arrow[from=1-1, to=1-2]
	\arrow[from=1-2, to=1-3]
	\arrow["\omega",from=1-3, to=1-4]
	\arrow[from=1-4, to=1-5]
	\arrow[from=2-1, to=2-2]
	\arrow[from=2-2, to=2-3]
	\arrow["\omega'",from=2-3, to=2-4]
	\arrow[from=2-4, to=2-5]
	\arrow["{\alpha|_{G}}"', from=1-2, to=2-2]
	\arrow["\alpha"', from=1-3, to=2-3]
	\arrow["{\beta_0}"', dashed, from=1-4, to=2-4]
\end{tikzcd}\] The isomorphism $\alpha$ induces an isomorphism $\beta_0$ by the $5$-lemma. If $B$ and $B'$ are identified via $\beta_0$, then $G'$ can be viewed as the kernel of the map $ \omega  \alpha^{-1}$. Denote the principal polarization induced on $B$ via the identification $\beta_0$ by, yet again, $\mu$.
On one hand, $n \lambda = \hat{\omega} \mu \omega$, and on the other hand, $n \lambda = \hat{\alpha}^{-1}\hat{\omega} \mu'\omega  \alpha^{-1}  $. In other words, $\mu'= \hat{\omega}^{-1}\hat{\alpha}\hat{\omega}\mu \omega \alpha \omega^{-1}$. Then $\beta:= \omega \alpha \omega^{-1}\in (\text{End}B) \otimes \mathbb{Q}$ has the stated properties. 
\end{proof}

The polarizations $\mu$, or $\mu'$, are not necessarily indecomposable. We call the group $G$ non-reducible if the induced polarization on the quotient is indecomposable. The questions of how maximal isotropic subgroups look like and when they are non-reducible was  answered in the literature, when $A$ is isomorphic to a product of elliptic curves, see \cite{Kuhn} and \cite{Kani}. Additionally, it is easy to construct a non-trivial automorphism $\alpha$ on an abelian surface, if it is isomorphic to a product of two isogeneous elliptic curves, see \cite{Howe}. From now on we take $n=2$, and Char$(K)\neq 2$.

\subsection{Weierstrass Equations}
Fix a separably closed field $\overline{K}$. Suppose $C$ is a genus $2$ curve over $\overline{K}$ which is $(2,2)$-isogeneous to a product of two elliptic curves. In other words, there exist elliptic subcovers $f_E$ and  $f_{E'}$
\[\begin{tikzcd}
	{} & C \\
	E && E'
	\arrow["{f_E}"', from=1-2, to=2-1]
	\arrow["{f_{E'}}", from=1-2, to=2-3]
\end{tikzcd}\]
onto elliptic curves $E$ and $E'$ such that $\text{deg}(f_E)=\text{deg}(f_{E'})=2$. Let $A$ be the principally polarized abelian surface $E \times E'$ whose polarization $\lambda$ comes from the product of the canonical polarizations on $E$ and $E'$. The Weil pairings on $E[2]$ and $E'[2]$ combine to give an non-degenerate alternating pairing $e_2$ on $A[2]$. The induced map $J(C) \rightarrow A$ is surjective and its kernel is a maximally isotropic subgroup $G$ of $A[2]$. More importantly, $G$ is the graph of an isomorphism $E[2](\overline{K}) \rightarrow E'[2](\overline{K})$ of full $2$-torsion structures that is not a restriction of an isomorphism $E \rightarrow E'$ of elliptic curves, see \cite{Kuhn}. Consequently, the polarization $2\lambda$ descends to a principal polarization $\mu$ on $B:=A/G$, and such that $(B, \mu ) \cong (J(C), \nu)$, where $\nu$ is the canonical polarization on $J(C)$. For a variety $A$ over a field $K$, and a field extension $K < L$, denote by $A_L$, the base extension of $A$ to $L$. Keeping the same exact conditions on $G$, the converse also holds: 
\begin{proposition}[\cite{HoweT}, Proposition 3]
Let $E$ and $E'$ be elliptic curves over a field $K$, let $\overline{K}$ be a separable closure of $K$, let $A$ be the polarized abelian surface $E \times E'$, and let $G \subset A[2](\overline{K})$ be the graph of a group isomorphism $\varphi: E[2](\overline{K})\rightarrow E'[2](\overline{K})$. Then $G$ is a maximal isotropic subgroup of $A[2](\overline{K})$.  Furthermore, the quotient polarized abelian variety $A_{\overline{K}}/G$ is isomorphic to the polarized Jacobian of a curve $C$ over $\overline{K}$, unless $\varphi$ is the restriction to $E[2](\overline{K})$ of an isomorphism $E_ {\overline{K}} \rightarrow {E'}_{\overline{K}} $. If $\varphi$ gives rise to a curve $C$, then $C$ and the isomorphism $J(C) \cong A/G$ can be defined over $K$ if and only if $G$ can be defined over $K$, if and only if $\varphi$ is an isomorphism of Galois modules; that is, $\varphi$ commute with the action of the absolute Galois group $\text{Gal}(\overline{K},K)$.
\end{proposition}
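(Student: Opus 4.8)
The plan is to prove the three assertions in order: first the (easy) statement that $G$ is maximal isotropic, then the (hard) dichotomy between Jacobians and products, and finally the Galois descent. For the first assertion I would compute the $e_2$-pairing directly on $G$. Writing $A[2]=E[2](\overline K)\times E'[2](\overline K)$, the $e_2$-pairing is the product of the two Weil pairings $e_2^E$ and $e_2^{E'}$. Since a two-dimensional $\mathbb{F}_2$-space carries a unique non-degenerate alternating form, every group isomorphism $\varphi$ is automatically an isometry of the Weil pairings, i.e. $e_2^{E'}(\varphi P,\varphi Q)=e_2^E(P,Q)$. Hence for $g=(P,\varphi P)$ and $h=(Q,\varphi Q)$ in $G$ one finds $e_2(g,h)=e_2^E(P,Q)\,e_2^{E'}(\varphi P,\varphi Q)=e_2^E(P,Q)^2=1$, so $G$ is isotropic; as $|G|=4=2^{\frac12\dim_{\mathbb F_2}A[2]}$ it is maximal. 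This part is a one-line verification.

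For the second assertion, the descent of polarizations recalled in \cite{Mum} shows that maximal isotropy makes $2\lambda$ descend to a principal polarization $\mu$ on $B:=A_{\overline K}/G$, so $(B,\mu)$ is a principally polarized abelian surface over $\overline K$. I would then invoke the classification used in the Introduction (a consequence of the Matsusaka--Ran criterion, \cite{BLComplex}): such a surface is the Jacobian of a smooth genus $2$ curve exactly when $\mu$ is indecomposable, and is otherwise a product of two elliptic curves with the product polarization. Thus everything reduces to showing that $\mu$ is decomposable if and only if $\varphi$ is the restriction of an isomorphism $\psi\colon E_{\overline K}\to E'_{\overline K}$. For the ``if'' direction I would use that an isomorphism of elliptic curves is automatically an isomorphism of principally polarized varieties, so after replacing $E'$ by $E$ via $\psi$ we may assume $E'=E$ and $\varphi=\mathrm{id}$, whence $G$ is the diagonal $2$-torsion $\{(P,P):P\in E[2]\}$; I would then exhibit an explicit elliptic curve in $B$ (the image of the full diagonal $\{(P,P):P\in E\}$) and show by a degree count on the isogeny $\Delta\to\Delta/G$ that $\mu$ restricts to a principal polarization on it, forcing $\mu$ to be decomposable and $B$ not to be a Jacobian. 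The converse — that any splitting of $(B,\mu)$ must arise from a genuine isomorphism — is the delicate point: I would pull the two elliptic factors of $B$ back along the degree-$4$ isogeny $A\to B$ and match them against $E$ and $E'$, using that the resulting map on $2$-torsion is the restriction of an isogeny which a degree count forces to be an isomorphism. This polarization bookkeeping, together with the converse, is the heart of the statement and where I expect the real work to lie; it is essentially Kani's reducibility criterion, \cite{Kani}, so I would ultimately appeal to that rather than reprove it.

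For the third assertion I would first note the immediate equivalence ``$G$ defined over $K$ $\iff$ $\varphi$ Galois equivariant'': the subgroup $G$ is stable under $\mathrm{Gal}(\overline K/K)$ precisely when $({}^{\sigma}P,{}^{\sigma}\varphi(P))\in G$ for all $\sigma$ and all $P$, that is, when ${}^{\sigma}\varphi(P)=\varphi({}^{\sigma}P)$. If $G$ is $K$-rational, then the isogeny $A\to A/G$ and the descended polarization $\mu$ are all defined over $K$, so $(A/G,\mu)$ is a principally polarized abelian surface over $K$ that becomes a Jacobian over $\overline K$; by Weil's descent of the Torelli theorem in genus $2$ it is the Jacobian of a curve $C$ over $K$, and the isomorphism $J(C)\cong A/G$ is then defined over $K$ as well. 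Conversely, if $C$ and the isomorphism $J(C)\cong A/G$ are defined over $K$, then $G=\ker(A\to A/G\cong J(C))$ is the kernel of a $K$-morphism and hence $K$-rational. The only non-formal ingredient here is the genus-$2$ descent of the Jacobian property, which I would cite rather than prove.
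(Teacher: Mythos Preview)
The paper does not supply a proof of this proposition at all: it is quoted verbatim as \cite{HoweT}, Proposition~3, and used as a black box. So there is no ``paper's own proof'' to compare against; your sketch is an attempt to prove a result that the author simply imports.

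That said, your outline is essentially sound and matches how the cited result is actually established. The maximal-isotropy computation is correct and complete as written (the point that every $\mathrm{GL}_2(\mathbb F_2)$-isomorphism is automatically symplectic is exactly what makes the graph isotropic). For the dichotomy, you have correctly located the substance in Kani's reducibility criterion \cite{Kani}; your ``if'' direction can be made cleaner by noting that when $\varphi=\psi|_{E[2]}$ the isogeny $(P,Q)\mapsto(P+\psi^{-1}Q,\,P-\psi^{-1}Q)$ from $E\times E'$ to $E\times E$ has kernel exactly $G$ and pulls the product polarization back to $2\lambda$, so $(B,\mu)\cong(E\times E,\lambda_E\times\lambda_E)$ on the nose. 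Your ``only if'' direction is the genuine content, and citing \cite{Kani} (or the equivalent argument in \cite{HoweT}) is the right move rather than redoing it.

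For the descent, your argument is correct but the phrase ``Weil's descent of the Torelli theorem'' is slightly off: what you need is the fact that in genus~$2$ the Torelli morphism $M_2\to A_2$ is an open immersion of stacks (equivalently, $\mathrm{Aut}(C)\to\mathrm{Aut}(J(C),\theta)$ is an isomorphism because the hyperelliptic involution hits $-1$), so a $K$-rational point of $A_2$ lying in the Jacobian locus automatically comes from a $K$-rational curve. With that citation in place, both directions of the Galois equivalence go through exactly as you wrote.
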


Keep in mind that the condition that $\varphi$ is not a restriction of a geometric isomorphism of elliptic curves trivially holds, if the pair of curves are not geometrically isomorphic. A defining equation of the curve $C$ can be explicitly computed by means of the defining equations of $E$ and $E'$:

\begin{proposition}[\cite{HoweT}, Proposition 4]\label{P10}
Using the same notation as in the previous proposition, let $E: y^2=f$ and $E': y^2=g$ be elliptic curves, where $f,g \in K[x]$ are separable monic cubic polynomials with discriminants $\Delta_f, \Delta_g$, respectively. Suppose 
\begin{equation}
\varphi: E(\bar{K})[2] \rightarrow E'(\bar{K})[2]
\end{equation}
is a Galois-invariant isomorphism of groups that is not a restriction of an isomorphism of elliptic curves $E_{\overline{K}} \rightarrow {E'}_{\overline{K}} $.
For $i \in \{1,2,3 \}$, let $\alpha_i$, respectively $\beta_i$, be the roots of $f$, respectively $g$. Moreover, suppose $\varphi((\alpha_i,0))=(\beta_i,0)$. Let $a_1,a_2,b_1,b_2,A,B$ be defined as
\begin{align*}
a_1&:=(\alpha_3-\alpha_2)^2/(\beta_3-\beta_2)+(\alpha_2-\alpha_1)^2/(\beta_2-\beta_1)+(\alpha_1-\alpha_3)^2/(\beta_1-\beta_3),\\
a_2&:=\alpha_1(\beta_3-\beta_2)+\alpha_2(\beta_1-\beta_3)+\alpha_3(\beta_2-\beta_1),\\
b_1&:=(\beta_3-\beta_2)^2/(\alpha_3-\alpha_2)+(\beta_2-\beta_1)^2/(\alpha_2-\alpha_1)+(\beta_1-\beta_3)^2/(\alpha_1-\alpha_3),\\
b_2&:=\beta_1(\alpha_3-\alpha_2)+\beta_2(\alpha_1-\alpha_3)+\beta_3(\alpha_2-\alpha_1,\\
A&:=\Delta_g a_1/a_2,\\
B&:=\Delta_f b_1/b_2,
\end{align*} 
then $A , B \in K$, $a_1,a_2,b_1$ and $b_2$ are all nonzero. Moreover, the polynomial $h$ defined by
\begin{align*}
h:=&(A(\alpha_2-\alpha_1)(\alpha_1-\alpha_3)x^2+B(\beta_2-\beta_1)(\beta_1-\beta_3))\\
&\cdot(A(\alpha_3-\alpha_2)(\alpha_2-\alpha_1)x^2+B(\beta_3-\beta_2)(\beta_2-\beta_1))\\
&\cdot(A(\alpha_1-\alpha_3)(\alpha_3-\alpha_2)x^2+B(\beta_1-\beta_3)(\beta_3-\beta_2))
\end{align*}
is a separable sextic in $K[x]$, and the polarized Jacobian of $C: y^2=h$ is isomorphic to the quotient of $E \times E'$ by $G$, where $G$ is the graph of $\varphi$.
\end{proposition}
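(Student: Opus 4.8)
The plan is to split the statement into an elementary algebraic part (that $A,B\in K$, that $a_1,a_2,b_1,b_2,A,B$ are nonzero, and that $h$ is a separable sextic over $K$) and a geometric part (the identification of polarized Jacobians), and to reduce the whole algebraic part to a single quadratic identity together with the geometric meaning of the nonvanishing of $a_2$.

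First I would record the identity $a_1 D=-a_2^{2}$, where $D:=(\beta_3-\beta_2)(\beta_2-\beta_1)(\beta_1-\beta_3)$. Writing $u_1,u_2,u_3$ for the differences $\beta_3-\beta_2,\ \beta_1-\beta_3,\ \beta_2-\beta_1$, so that $u_1+u_2+u_3=0$ and $a_2=\alpha_1u_1+\alpha_2u_2+\alpha_3u_3$, clearing denominators turns $a_1D$ into $-(\alpha_1u_1+\alpha_2u_2+\alpha_3u_3)^2$ after repeatedly substituting $u_2+u_3=-u_1$ and its cyclic analogues. The mirror identity $b_1D'=-b_2^{2}$ holds with $D':=(\alpha_3-\alpha_2)(\alpha_2-\alpha_1)(\alpha_1-\alpha_3)$, and a direct cancellation gives $a_2=-b_2$. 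Since $\Delta_f=D'^{2}$ and $\Delta_g=D^{2}$, these identities collapse the definitions to the polynomial expressions $A=-D\,a_2$ and $B=D'\,a_2$. In particular $A,B,a_1,b_1$ are all nonzero as soon as $a_2\neq0$, and all six quantities are symmetric under the simultaneous (diagonal) permutation of the pairs $(\alpha_i,\beta_i)$, because $D$, $D'$ and $a_2$ are each alternating under that action.

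To obtain $A,B\in K$ I would feed in the hypotheses on $\varphi$: for $\sigma\in\mathrm{Gal}(\overline K/K)$ the permutation induced on $\{\alpha_i\}$ (the roots of $f\in K[x]$) equals the one induced on $\{\beta_i\}$, since $\varphi$ is Galois-equivariant and $\varphi(\alpha_i,0)=(\beta_i,0)$; hence the Galois action is exactly the diagonal $S_3$-action, and the symmetric quantities $A=-D\,a_2$ and $B=D'\,a_2$ are Galois-fixed, so they lie in $K$. For the nonvanishing I would show that $a_2=0$ if and only if the points $(\alpha_i,\beta_i)$ are collinear, i.e. $\beta_i=u^{2}\alpha_i+r$ for some $u,r$, which is exactly the condition that $\varphi$ be the restriction of the isomorphism $(x,y)\mapsto(u^{2}x+r,u^{3}y)$ of $E_{\overline K}\to E'_{\overline K}$; this is excluded by assumption, so $a_2\neq0$ and every asserted nonvanishing follows. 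Finally, writing $h=H(x^{2})$ with $H(w)=\prod_i(c_iw+d_i)$, where $c_i$ is $A$ times a product of two of the $\alpha_j-\alpha_k$ and $d_i$ is $B$ times a product of two of the $\beta_j-\beta_k$, the leading coefficient of $h$ is $c_1c_2c_3=A^{3}\Delta_f\neq0$, so $\deg h=6$; the roots of the $i$-th factor are $\pm\sqrt{-d_i/c_i}$ with $-d_i/c_i=(\alpha_j-\alpha_k)/(\beta_j-\beta_k)$, and two such ratios coincide precisely when the $(\alpha_i,\beta_i)$ are collinear, so $a_2\neq0$ makes the three factors pairwise coprime and each separable; hence $h\in K[x]$ is a separable sextic.

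The remaining and deepest assertion, that the polarized Jacobian of $C:y^2=h$ is isomorphic to $(E\times E')/G$, I would establish over $\overline K$ and then descend. Since $h$ is even, $C$ carries the involution $(x,y)\mapsto(-x,y)$, and its Jacobian splits up to isogeny as the product of the two elliptic quotients $E_1=C/\langle(x,y)\mapsto(-x,y)\rangle$, with model $y^2=H(w)$, and $E_2=C/\langle(x,y)\mapsto(-x,-y)\rangle$, with model $v^2=wH(w)$. The core computation is to verify that the specific normalizing factors $A,B$ (built from $\Delta_f,\Delta_g$) trivialize the quadratic twists, so that $E_1\cong E$ and $E_2\cong E'$ over $\overline K$, and that the resulting identification of $E[2]$ with $E'[2]$ is exactly $\varphi$; this forces the connecting $(2,2)$-isogeny $E\times E'\to J(C)$ to have kernel $G$ and $2\lambda$ to descend to the canonical polarization, yielding the polarized isomorphism over $\overline K$. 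I expect this matching of the two elliptic quotients and of their $2$-torsion gluing to be the main obstacle, since it requires tracking the explicit Weierstrass models through the slopes $(\alpha_j-\alpha_k)/(\beta_j-\beta_k)$. To conclude over $K$ I would invoke the previous proposition: $A,B\in K$ means $C$ is defined over $K$, and the Galois-equivariance of $\varphi$ means $G$ is defined over $K$, so the polarized isomorphism descends to $K$.
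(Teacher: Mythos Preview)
The paper does not prove this proposition: it is quoted as Proposition~4 of \cite{HoweT} and stated without proof, followed only by the computational reformulation $h=\kappa\prod(\gamma_{ij}x^{2}-1)$. There is therefore no proof in the paper to compare your attempt against.

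That said, your plan is sound and the algebraic simplifications check out. The identity $a_1D=-a_2^{2}$ (and its mirror $b_1D'=-b_2^{2}$) is correct, and together with $b_2=-a_2$ it collapses the definitions to $A=-D\,a_2$ and $B=D'a_2$; from these, $K$-rationality via the diagonal $S_3$-action, nonvanishing via the collinearity interpretation of $a_2=0$, and separability of $h$ all follow as you describe. Your geometric outline---using the evenness of $h$ to produce the two degree-$2$ elliptic quotients $y^{2}=H(w)$ and $v^{2}=wH(w)$, then matching their $2$-torsion gluing to $\varphi$---is the natural route; the step you flag as the main obstacle (identifying the quotients with $E$ and $E'$ and tracking the Weierstrass points through the ratios $(\alpha_j-\alpha_k)/(\beta_j-\beta_k)$) is genuine bookkeeping but not conceptually difficult, and the descent to $K$ then follows from the Galois-equivariance of $\varphi$ exactly as you say.
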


For $i,j \in \{ 1,2,3\} $, with $i \neq j$ set
\begin{align*}
\gamma_{ij}&:=(\beta_i-\beta_j)/(\alpha_i-\alpha_j),\\
\alpha&:=(a_3-a_2)(a_2-a_1)(a_1-a_3),\\
\beta&:=(b_3-b_2)(b_2-b_1)(b_1-b_3).
\end{align*}
We, then, get
\begin{equation*}
h=\kappa(\gamma_{32}x^2-1)\cdot(\gamma_{21}x^2-1)\cdot (\gamma_{13}x^2-1); \text{ where } \kappa:=A^3 \alpha^3/ \beta.
\end{equation*}

Let $\psi: E \rightarrow E'$ be an isogeny of elliptic curves: Then the product variety $E \times E'$ is equipped by a non-trivial automorphism
\begin{align*}
\alpha: (X,Y) \mapsto (X,Y+\psi(X)).
\end{align*}

Notice that the condition that $\alpha(G)$ is maximally isotropic, with respect to the $\lambda$-$e_n$-pairing, is not superfluous. That is, it could happen that $G$ is maximally isotropic while its image $\alpha(G)$ is not:
\begin{example}
Let $\psi$ be an isogeny of odd degree, and such that $E$ and $E'$ are not geometrically isomorphic. Let $\{P_i$, for $i \in \{ 1,2,3\} \}$, be the non-zero $2$-torsion points on $E$. Let $Q_i(:=\psi(P_i))$, for $i \in \{ 1,2,3\} $, be the non-zero $2$-torsion points on $E'$. Let $O_E$ and $O_{E'}$ be the zero point on $E$ and $E'$, respectively. Consider the automorphism $\alpha: E \times E' \rightarrow E \times E'$, $(P,Q)\mapsto (P,Q+\psi(P)) $. The graph 
\begin{equation*}
G:\{(O_E,O_{E'}),(P_1,Q_1),(P_2,Q_2),(P_3,Q_3) \}
\end{equation*} 
is a maximally isotropic subgroup of the $2$-torsion subgroup of $E \times E'$. Nonetheless, 
\begin{equation*}
\alpha(G)=\{(O_E,O_{E'}),(P_1,O_{E'}),(P_2,O_{E'}),(P_3,O_{E'}) \}
\end{equation*}  
is not a graph of an isomorphism of $2$-torsion subgroups, and therefore isn't isotropic with respect to the $\lambda$-$e_2$-pairing on $E \times E'$.
\end{example}\\

Remember that the goal was to produce two principal polarizations on an abelian surface. The above construction clearly does this, but the two polarizations are not necessarily distinct. According to proposition \ref{p4}, the two polarizations are distinct if and only if $\beta \notin \text{End}B$. It could happen, nonetheless, that $\beta \in \text{End}B$:

\begin{example} \label{ae2}
Let $E:y^2=f(x)$ be an elliptic curve over a field $K$. Assume that the cubic separable  polynomial $f$ splits over a cubic extension of $K$.  Let $A=E \times E$. Consider the map $\alpha: A \rightarrow A$, $(P,Q) \mapsto (P, Q +[m] Q)$, where $[m]$ is the multiplication  by $m$ map on $E$, for some odd integer $m$. Denote the three non zero $2$-torsion points of $E$ by $P_i$, for $i \in \{ 1, 2, 3 \}$. The two subgroups 
\begin{align*}
G_1&:=\{ (O,O),(P_1,P_2),(P_2,P_3),(P_3,P_1)\},\\
G_2&:=\{ (O,O),(P_1,P_3),(P_2,P_1),(P_3,P_2)\}.
\end{align*}
of $A[2]$ are maximally isotropic and are such that $\alpha (G_1)=G_2$. Generically, there exist two hyperelliptic curves $C_1$ and $C_2$ with $A/G_i \cong J(C_i)$, for $i \in \{1,2\}$. Nonetheless, it will always be the case that $C_1 \cong C_2$: Using the same notation as in the proof of proposition 4 in \cite{HoweT}, we denote the elliptic subcovers as

\[\begin{tikzcd}
	& {C_1} &&&& {C_2} \\
	E && E && E && E
	\arrow["{f_1}"', from=1-2, to=2-1]
	\arrow["{g_1}", from=1-2, to=2-3]
	\arrow["{f_2}"', from=1-6, to=2-5]
	\arrow["{g_2}", from=1-6, to=2-7]
\end{tikzcd}\]
Then one can show that the map \begin{equation*}
\alpha: C_1 \rightarrow C_2,\ \  (u,v) \mapsto (1/u,-v/u^3)
\end{equation*} is an isomorphism such $f_1= g_2 \circ \alpha$ and $g_1 =f_2 \circ \alpha$.

\end{example}

\subsection{Galois Restrictions}
We now investigate, when the group $G$ and its image $\alpha (G)$ are equivariant under the action of the absolute Galois group. Denote by $H$, the kernel of a cyclic isogeny $\psi: E \rightarrow E'$ of elliptic curves. We first study the case when the order of $H$ is even, and later when it is odd.
\subsubsection*{Even Degree}
Using the same notation as above, assume that the order of $H$ is even. Denote by $Q$, the unique $2$-torsion point in $H$. Let the other two nonzero $2$-torsion points in $E[2]$ be named $P$ and $R$. Moreover, denote by $Q'$ the image of $P$ and $R$ under the quotient by $H$ map. Moreover, denote by $P'$ and $R'$  the other two nonzero $2$-torsion points in $E'[2]$. In this setting, the points $Q$ and $Q'$ are $K$-rational on their respective curves. Assume that $E$ has exactly one nonzero $K$-rational $2$-torsion point. Then any Galois equivariant isomorphism $E[2] \xrightarrow{\cong} E'[2]$ must send $Q$ to $Q'$. Potentially, there are exactly two different isomorphisms,
\begin{equation*}
\varphi_i: E[2] \rightarrow E'[2], \  i \in \{ 1, 2 \},
\end{equation*}
which could be invariant under the action of the absolute Galois group. Their graphs of which would be 
\begin{align*}
G_{\varphi_1}:=&\{(O,O),(Q,Q'),(P,P'),(R,R')\},\\
G_{\varphi_2}:=&\{(O,O),(Q,Q'),(P,R'),(R,P') \}.
\end{align*}
Luckily, the automorphism $\alpha$ send the above graphs to each other. The only obstruction would be that the points $P$ on $E$ and $P'$ on $E'$, need not be defined over the same quadratic extension of $K$. To overcome this obstruction, it is necessary that the discriminants of the isogenous elliptic curves, $E$ and $E'$, differ by a square in $K$. We call this condition \textit{the Galois restriction in even degree.}

If all $2$-torsion points on $E$ are $K$-rational, it still suffices to consider only the above graphs: In case all $2$-torsion points on $E$ are rational there are $6$ different Galois invariant graphs. Those come in pairs each consisting of a graph and its image under the automorphism $\alpha$ as described above. However, the pair of curves obtained from each pair of graphs are just twists of the ones already described above.

\subsubsection*{Odd Degree}

Assume now that the order of $H$ is odd. For $j \in \{1, 2, 3 \}$, denote by $P_j$ the three distinct nonzero geometeric $2$-torsion points in $E[2]$. Moreover, denote by $P'_j$ the image of $P_j$ under the quotient by $H$ map in $E'[2]$. Let $G_{\varphi_i}$ be the graph of $\varphi_i$ for $i \in \{1 , 2\}$. Thinking ahead, the graph  $G_{\varphi_1}$ shouldn't have a point like $(P_j,P'_j)$ for some $j \in \{1,2,3 \}$; since $\alpha(P_j,P'_j)=(P_j,O_{E'})$, which can not be a point in a graph of any isomorphism. Thereupon, there are only two possible graphs that satisfy this criterion, namely,
\begin{align*}
G_{\varphi_1}&:=\{(O,O),(P_1,P'_2),(P_2,P'_3),(P_3,P'_1) \},\\
G_{\varphi_2}&:=\{O,O),(P_1,P'_3),(P_2,P'_1),(P_3,P'_2) \}.
\end{align*}
Fortunately, the automorphism $\alpha$ maps one to the other. \\

Since quotient map is assumed to be defined over $K$, for any field extension $L/K$ the point $P'_j$ is defined over $L$, whenever $P_j$ is. That fact reflects itself on the discriminant of $E$ being a square in $K$ multiple of that of $E'$.  It is not hard to see that each graph is Galois invariant if and only if the points $P_j$ and $P'_j$ are all rational over a Galois cubic extension of $K$. The last condition is equivalent to the discriminant of $E$, hence also that of $E'$, being a square in $K$:

\begin{lemma}\label{lgr}

Let $E:y^2=f(x)$ be elliptic curves, where $f \in K[x]$ is a separable monic cubic polynomial. $f(x)$ splits over a cubic Galois extension of $K$ if and only if the discriminant of the elliptic curve $E$ is a square.
\end{lemma}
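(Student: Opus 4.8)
The plan is to reduce the statement to the classical discriminant criterion of Galois theory for cubics. First I would record the relationship between the two discriminants involved: writing $\Delta_f$ for the discriminant of the cubic $f$, a direct computation gives $\Delta_E = 16\,\Delta_f$ for the curve $E : y^2 = f(x)$. Since $\mathrm{char}(K)\neq 2$, the factor $16 = 4^2$ is a nonzero square, so $\Delta_E$ is a square in $K$ if and only if $\Delta_f$ is. This lets me replace ``$\Delta_E$ is a square'' by ``$\Delta_f$ is a square'' throughout, and work entirely with the polynomial.

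Next, let $\alpha_1,\alpha_2,\alpha_3 \in \overline{K}$ be the roots of $f$ and let $L = K(\alpha_1,\alpha_2,\alpha_3)$ be its splitting field. Because $f$ is separable, $L/K$ is Galois; I set $G = \mathrm{Gal}(L/K)$, which embeds into $S_3$ via its action on $\{\alpha_1,\alpha_2,\alpha_3\}$. Putting $\delta = (\alpha_1-\alpha_2)(\alpha_1-\alpha_3)(\alpha_2-\alpha_3)$, separability gives $\delta \neq 0$ and $\delta^2 = \Delta_f$. The key computation is that every $\sigma \in G$ satisfies $\sigma(\delta) = \mathrm{sgn}(\sigma)\,\delta$, where $\mathrm{sgn}$ denotes the sign of the induced permutation; using $\mathrm{char}(K)\neq 2$ and $\delta \neq 0$, this shows that $\delta$ is $G$-fixed precisely when $G \subseteq A_3$. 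Hence $\delta \in K$ if and only if $G \subseteq A_3$, and since $\Delta_f = \delta^2$, the discriminant $\Delta_f$ is a square in $K$ if and only if one (equivalently each) of its square roots $\pm\delta$ lies in $K$, i.e. if and only if $G \subseteq A_3$.

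It then remains to match the condition $G \subseteq A_3$ with the geometric phrasing. The subgroups of $S_3$ contained in $A_3 \cong \mathbb{Z}/3\mathbb{Z}$ are only the trivial group and $A_3$ itself, so $G \subseteq A_3$ forces $[L:K] \in \{1,3\}$: either $f$ already splits over $K$, or $L/K$ is a cyclic—hence Galois—cubic extension over which $f$ splits. Conversely, if $f$ splits over a cubic Galois extension $M$, then $L \subseteq M$ with $[M:K]=3$, so $[L:K]$ divides $3$; as every order-$3$ subgroup of $S_3$ equals $A_3$, this yields $G \subseteq A_3$. I expect the only delicate point to be the degenerate case $[L:K]=1$, where $f$ splits already over $K$ and the phrase ``$f$ splits over a cubic Galois extension'' has to be read as the splitting field being Galois of degree dividing $3$. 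In the setting of interest $f$ is irreducible, so $G$ is transitive and the dichotomy becomes the clean one: $\Delta_f$ is a square if and only if $G = A_3$, if and only if the three $2$-torsion abscissae $\alpha_j$ become rational over a single cyclic cubic Galois extension of $K$, which is exactly the property invoked for the Galois-invariance of the graphs $G_{\varphi_i}$.
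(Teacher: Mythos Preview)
Your argument is correct and follows essentially the same route as the paper's proof: both reduce to the classical Galois-theoretic discriminant criterion for cubics, relating ``$\Delta_f$ is a square'' to ``$\mathrm{Gal}(L/K)\subseteq A_3$,'' with the paper simply citing this fact while you unpack it via $\delta=\prod_{i<j}(\alpha_i-\alpha_j)$. Your treatment is in fact slightly more complete, since your uniform $G\subseteq S_3$ argument also covers the intermediate case where $f$ has exactly one $K$-rational root, which the paper's case split (``$f$ splits over $K$'' versus ``$f$ is irreducible'') passes over.
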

\begin{proof}
If $f$ splits over $K$, then the discriminant of $E$ is a square. Otherwise, assume that $f$ is irreducible over $K$. The result follows, since the discriminant of the cubic extension $L:=K[x]/(f(x))$ is a square, if and only if, $L$ is a cubic Galois extension of $K$. Notice that the discriminant of $E$ is a square multiple the discriminant of $L$. 
\end{proof}
\textit{The Galois restriction in odd degree}  can then be stated as the condition that the discriminant of $E$ is a square in $K$.

\begin{remark}\label{geo} 
A pair of elliptic curves defined over $K$, are geometrically isomorphic, if and only if, they are isomorphic over, at most, a quadratic extension of $K$. Consequently, their respective discriminants vary by a square in $K$.
\end{remark}

We performed most of the following computations using the computer algebra Magama \cite{Mag}. Some of the routines were add to the repository:\\
\texttt{https://github.com/RaghdaAbdellatif/Pairs-Isomorphic-Jacobians}

\section{Howe's Example}\label{eH}
The original construction of the following example can be found in \cite{Howe}. Nonetheless, in this section, we briefly summarize this example: Fixing a parameter $s$ over a field $K$, we denote by
\begin{align*}
&(E_s, E'_s) \text{, a pair of $2$-isogeneous elliptic curves,}\\
&j_2(s), \text{ resp. }  j'_2(s) \text{, the j-invariant of $E_s$, resp. $E'_s$,}\\
&\Delta_2(s), \text{ resp. }  \Delta'_2(s)\text{, the discriminant of $E_s$, resp. $E'_s$.}\\
\end{align*}
We used Magma Computer Algebra \cite{Mag} to verify the isomorphism classes of the pair ($E_s$, $E'_s$), and compute their discriminants: \begin{align*}
j_2(s)=&\dfrac{(64s+16)^3}{64s},\\
E_s:y^2& =x( x^2 -4 (s + 1)x + 4(s + 1)),\\
\Delta_2(s)=&2^{12}s(s+1)^3,\\
j'_2(s)=&\dfrac{(64s+256)^3}{64s^2},\\
E'_s:y^2& = x(x^2 + 8(s + 1)x + 16s(s + 1)),\\
\Delta'_2(s)=&2^{18}s^2(s+1)^3.
\end{align*}
Owing to the Galois restrictions in the even degree, we require that $s=t^2$, for some parameter $t$ that takes value in $K$. The resulting family is described in the following theorem:
\begin{theorem}
Suppose $\text{char}(K) \neq 2$, and let $t \in K$ be such that 
\begin{equation*}
t(t^2-1)(t^2+1)\neq 0.
\end{equation*}
Let $C_t$ be the genus $2$ curve defined by the equation
\begin{equation*}
(t+1) y^2=(2x^2-t)(4t^2x^4+4(t^2+t+1)x^2+1)
\end{equation*}
The Jacobians of $C_t$ and $C_{-t}$ are isomorphic as unpolarized abelian varieties. Moreover, the curves $C_t$ and $C_{-t}$ are geometrically non-isomorphic unless $\text{char}(K)=11$ and $t^2 \in \{-3,-4\}$. 
\end{theorem}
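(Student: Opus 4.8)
The plan is to obtain the first assertion purely from the structural results of Section \ref{s21}, and to settle the second by comparing Igusa invariants. Throughout I set $s=t^2$, which is exactly the substitution forced by the Galois restriction in even degree: with $s=t^2$ the discriminants become $\Delta_2=2^{12}t^2(t^2+1)^3$ and $\Delta'_2=2^{18}t^4(t^2+1)^3$, whose ratio is the square $(8t)^2$, so that the nonzero $2$-torsion of $E_{t^2}$ and of $E'_{t^2}$ is rational over the \emph{same} quadratic extension of $K$ and the two candidate graphs are Galois invariant. The hypothesis $t(t^2-1)(t^2+1)\neq0$ keeps $(E_{t^2},E'_{t^2})$ away from the cusps and elliptic points of $X_0(2)$ and guarantees that the resulting sextic is separable, so that $C_t$ is genuinely of genus $2$.

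To produce the equation of $C_t$ I would record the nonzero $2$-torsion $x$-coordinates, namely $0$ and $2(t^2+1)\pm2t\sqrt{t^2+1}$ on $E_{t^2}$, and $0$ and $-4(t^2+1)\pm4\sqrt{t^2+1}$ on $E'_{t^2}$, fix the matching $\varphi_1$ sending $(0,0)\mapsto(0,0)$ and pairing the $\pm$ roots with like signs, and feed the roots $\alpha_i,\beta_i$ into the formulas for $a_1,a_2,b_1,b_2,A,B$ and the sextic $h$ of Proposition \ref{P10}. Simplifying $h$ should reproduce, up to the displayed leading factor $(t+1)$, the stated right-hand side $(2x^2-t)(4t^2x^4+4(t^2+t+1)x^2+1)$. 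The decisive observation is the behaviour under $t\mapsto -t$: since the $2$-torsion of $E'_{t^2}$ carries no factor of $t$ it is fixed, while the two roots $2(t^2+1)\pm2t\sqrt{t^2+1}$ of $E_{t^2}$ are interchanged; hence $t\mapsto-t$ carries the matching $\varphi_1$ to the second matching $\varphi_2$, and applying Proposition \ref{P10} to $G_{\varphi_2}$ yields precisely $C_{-t}$. Because the automorphism $\alpha$ of $A=E_{t^2}\times E'_{t^2}$ sends $G_{\varphi_1}$ to $G_{\varphi_2}$, Proposition \ref{p4} furnishes an isomorphism $A/G_{\varphi_1}\cong A/G_{\varphi_2}$ of abelian surfaces which, after forgetting the polarizations, is an isomorphism $J(C_t)\cong J(C_{-t})$ of unpolarized abelian varieties. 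This gives the first claim.

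For the second claim I would use that in characteristic $\neq2$ the geometric isomorphism class of a genus $2$ curve is determined by its Igusa invariants, so $C_t\cong_{\overline K}C_{-t}$ if and only if the invariants of the two defining sextics coincide. Concretely I would compute the Igusa invariants $J_2,J_4,J_6,J_8,J_{10}$ of $h$ as functions of $t$, pass to the weighted-projective point (equivalently the absolute invariants), and determine the common zero locus of the differences obtained by replacing $t$ with $-t$, intersected with the region $t(t^2-1)(t^2+1)\neq0$. Over characteristic $0$ this locus is empty; the content of the statement is that, after reduction, it becomes nonempty exactly in characteristic $11$ at $t^2\in\{-3,-4\}$.

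The main obstacle is this last elimination. The invariants of $h$ are unwieldy rational functions, and proving that the system $J_{2i}(C_t)=J_{2i}(C_{-t})$ has no solution outside the claimed exceptional case requires forming the numerators of the differences, eliminating (by gcd or resultants) to isolate a finite bad locus, and then checking by reduction modulo small primes that this locus meets the admissible parameter region only for $\mathrm{char}\,K=11$, $t^2\in\{-3,-4\}$; this is precisely the computation one delegates to Magma. A minor but necessary check alongside it is that, at these exceptional values, $t$ still satisfies $t(t^2-1)(t^2+1)\neq0$ in characteristic $11$, so that the isomorphic pair really does occur inside the family rather than at a degenerate parameter.
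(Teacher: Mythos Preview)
Your proposal is correct and follows essentially the same approach as the paper: the paper does not prove this theorem in detail but refers to \cite{Howe}, and your outline reproduces exactly the method used there and in the paper's own proofs of Theorems~1--3 (apply Proposition~\ref{P10} to the two Galois-invariant graphs $G_{\varphi_1},G_{\varphi_2}$, invoke Proposition~\ref{p4} to identify the two quotients as unpolarized abelian surfaces, and then separate $C_t$ from $C_{-t}$ via resultants of weighted differences of Igusa invariants, with a prime-by-prime check of the residual bad characteristics). Your observation that $t\mapsto -t$ swaps the two non-rational $2$-torsion points of $E_{t^2}$ while fixing those of $E'_{t^2}$, and hence exchanges $\varphi_1$ with $\varphi_2$, is a clean way to see that the second quotient is $C_{-t}$ without computing $\tilde h_t$ separately; the paper's parallel proofs instead compute both sextics and then recognise the second as the first with $t$ replaced by $-t$, which amounts to the same thing.
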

\begin{remark}
In case $t=1$ or $-1$, the curves $E_t$ and $E'_t$ become isomorphic. Furthermore, the isomorphism $\varphi_1$, resp. $\varphi_2$, of $2$-torsion subgroups, in  case $t=-1$, resp. $t=1$ , is a restriction of an isomorphism over $K(i\sqrt{2})$ of elliptic curves. As a matter of course, the restriction $t^2-1\neq 0$ becomes essential.
\end{remark}

\begin{remark}
The two elliptic curves are geometrically isomorphic when, for example, $t^2-9/8t+1=0$. Nonetheless, for such $t$, the hyperelliptic curves $C_t$ and $C_{-t}$ are well defined and  are non-isomorphic. Indeed, the isomorphism $f:E_t \rightarrow E'_t, (x,y) \mapsto (1/5(8t - 12)x + 1/10(-21t + 24) , 1/5(8t + 8)y)$ sends $P \mapsto R', Q \mapsto P'$ and $R \mapsto Q'$. After identifying $E_t$ with $E'_t$ via the isomorphism $f$, the graphs $G_{\phi_1}$ and $G_{\phi_2}$ become
\begin{align*}
G_{\phi_1}&= \{(O,O),(P,Q),(Q,R),(R,P) \}\\
G_{\phi_2}&=\{ (O,O),(P,Q),(Q,P),(R,R)\},
\end{align*}
where neither is a restriction of an automorphism of $E_t$.
\end{remark}

\section{Proof of Theorem 1}\label{example1}
We use the above parameterization of the rational curve $X_0(3)$ to construct a one-parameter family of pairs of $3$-isogenous elliptic curves: We denote by
\begin{align*}
&(E_s, E'_s) \text{, a pair of $3$-isogeneous elliptic curves,}\\
&j_3(s), \text{ resp. } j'_3(s) \text{, the j-invariant of $E_s$, resp. $E'_s$,}\\
&\Delta_3(s), \text{ resp. } \Delta'_3(s)   \text{, the discriminant of $E_s$, resp. $E'_s$.}\\
\end{align*}
We used Magma Computer Algebra \cite{Mag}\footnote{In order to compute the isogeny $\psi: E_s \rightarrow E'_s$, we first used the function \textsf{DivisionPolynomial} to compute the $3$-division polynomial of $E_s$. We then extended the field of definition of $E_s$, to acquire a rational $3$-torsion point in order to feed it to the function  \textsf{IsogenyFromKernel}.} to construct a $3$-isogenous pair ($E_s$, $E'_s$) of elliptic curves, and compute their respective discriminants.

\begin{align*}
j_3(s)=&\dfrac{(s+27)(s+3)^3}{s},\\
E_s:y^2 &= x^3 + \frac{1}{4}(s^2 + 18s - 27)x^2 - 36sx -s^3 - 18s^2 + 27s,\\
\Delta_3(s)=&s(s+3)^6(s+27)^2,\\
j'_3(s)=&\dfrac{(s+27)(s+243)^3}{s^3},\\
E'_s:y^2= & x^3 +
\frac{1}{4}(s^2 + 18s - 27)x^2 - (5s^3 + 165s^2 + 891s + 1215)x \\
&-s^5 - 65s^4 - 1285s^3 - 7614s^2 - 17496s - 13851,\\
\Delta'_3(s)=&s^3(s+3)^6(s+27)^2.
\end{align*}


The Galois restrictions in the odd degree imposes the condition $s=t^2$, for some $t \in K$. For $i \in \{1, 2, 3 \}$, let $(r_i,0)$ be the three distinct nonzero $2$-torsion points on $E_s$, and let $(s_i,0):=\psi((r_i,0))$ be their images on $E'_s$. Using the same notation as in proposition \ref{P10}, and the exposition afterwards. We put
\begin{equation*}
\tilde{\alpha}_i=\alpha_i=r_i,\ \ \ \tilde{\beta}_i=\beta_{\sigma(i)}=s_{\sigma^2(i)}.
\end{equation*}
where $\sigma$ is the cycle $(123)$ acting on $\{1, 2, 3 \}$. Here, we actually compute the quotient by the groups:
\begin{align*}
G_{\varphi_1}&:=\{(O,O),(P_1,P'_2),(P_2,P'_3),(P_3,P'_1) \}\\
G_{\varphi_2}&:=\{O,O),(P_1,P'_3),(P_2,P'_1),(P_3,P'_2) \}
\end{align*}
where $P_i:=(r_i,0)$ and $P'_i:=(s_i,0)$. We used Magma \cite{Mag} to compute the following:
\begin{align*}
\gamma_{21}\gamma_{13}\gamma_{32}&=\tilde{\gamma}_{21}\tilde{\gamma}_{13}\tilde{\gamma}_{32}=t^2\\
\gamma_{21}+\gamma_{13}+\gamma_{32}&=\dfrac{1}{16t} (t^4-8t^3+42t^2-144t-243), \\
\tilde{\gamma}_{21}+\tilde{\gamma}_{13}+\tilde{\gamma}_{32}&=-\dfrac{1}{16t}(t^4+8t^3+42t^2+144t-243) , \\
\gamma_{21}\gamma_{13}+\gamma_{32}\gamma_{21}+\gamma_{21}\gamma_{13}&=-\dfrac{1}{16t}(t^4+16t^3-126t^2+648t-2187) ,\\
\tilde{\gamma}_{21}\tilde{\gamma}_{13}+\tilde{\gamma}_{32}\tilde{\gamma}_{21}+\tilde{\gamma}_{21}\tilde{\gamma}_{13}&=\dfrac{1}{16t}(t^4-16t^3-126t^2-648t-2187) ,\\
\kappa=2^{-10}(t^2+3)^{21}(t&^2+27)^{8}t^9(t^2-8t+27)^3 ,\\
\tilde{\kappa}=- 2^{-10} (t^2+3)^{21}&(t^2+27)^{8}t^9(t^2+8t+27)^3,
\end{align*}
\begin{align*}
h_t&=2^{-10}(t^2+3)^{21}(t^2+27)^{8}t^8(t^2-8t+27)^3 \\
&\cdot(16t^3x^6+(t^4+16t^3-126t^2+648t-2187)x^4+(t^4-8t^3+42t^2-144t-243)x^2-16t), \\
\tilde{h}_t&=2^{-10}(t^2+3)^{21}(t^2+27)^{8}t^8(t^2+8t+27)^3\\
& \cdot(-16t^3x^6+(t^4-16t^3-126t^2-648t-2187)x^4+(t^4+8t^3+42t^2+144t-243)x^2+16t).
\end{align*}
By proposition $\ref{P10}$, the genus $2$ curves $y^2=h_t(x)$ and $y^2=\tilde{h}_t(x)$ have isomorphic unpolarized Jacobians insofar as $t(t^2+27)(t^2+243)(t^2+3)(t^4 - 10t^2 + 729)\neq 0$. Let $C_t$ be the genus $2$ curve defined by the equation
\begin{align*}
C_t:&(t^2+27)(t^2-8t+27)y^2= (16t^3x^6+(t^4+16t^3-126t^2+648t-2187)x^4+\\
&(t^4-8t^3+42t^2-144t-243)x^2-16t).
\end{align*}
The curve $y^2=h_t(x)$, resp. $y^2=\tilde{h}_t(x)$, is isomorphic over $K$ to the curve $C_t$, resp. $C_{-t}$ insofar as $t(t^2+27)(t^2+243)(t^2+3)(t^4 - 10t^2 + 729)\neq 0$.\\

In order to find the possible pathological cases in some positive characteristics, one computes the Igusa invariants, for example using the function \textsf{ScaledIgusaInvariants} in the computer algebra package Magma \cite{Mag}: Let $J_2(t), J_4(t),J_6(t),J_8(t), J_{10}(t) \in \mathbb{Z}(t) $ be  Igusa invariants of the twist $y^2=16t^3x^6+(t^4+16t^3-126t^2+648t-2187)x^4+(t^4-8t^3+42t^2-144t-243)x^2-16t$. The polynomials
\begin{align*}
R_2(t)&:=\dfrac{J_4(t)J_2(-t)^2-J_4(-t)J_2(t)^2}{t(t^2+27)^3(t^2+243)(t^2+3)},\\
R_3(t)&:=\dfrac{J_6(t)J_2(-t)^3-J_6(-t)J_2(t)^3}{t^5(t^2+27)^3(t^2+243)(t^2+3)},\\
R_5(t)&:=\dfrac{J_{10}(t)J_2(-t)^5-J_{10}(-t)J_2(t)^5}{t^5(t^2+27)^5(t^2+243)(t^2+3)}.
\end{align*}
are defined over $\mathbb{Z}[t]$. Using Magma, it was found that $ \text{gcd}(\text{resultant}(R_2,R_3),\text{resultant}(R_2,R_5))$
is divisible exactly by $2,3,5,13$ and $17$. So unless $\text{char}(K)$ is one of those values, the curves $C_t$ and $C_{-t}$ are not isomorphic for every $t\in K$ such that $t(t^2+27)(t^2+243)(t^2+3)(t^4 - 10t^2 + 729)\neq 0$.\\
\textit{Case 1:} If $\text{char}(k)=3$, the polynomials
\begin{align*}
R_2(t)&:=\dfrac{J_4(t)J_2(-t)^2-J_4(-t)J_2(t)^2}{t^{23}(t^2-1)^2},\\
R_3(t)&:=\dfrac{J_6(t)J_2(-t)^3-J_6(-t)J_2(t)^3}{t^{27}},\\
R_5(t)&:=\dfrac{J_{10}(t)J_2(-t)^5-J_{10}(-t)J_2(t)^5}{t^{55}(t^2-1)^2},
\end{align*}
are defined over  $\mathbb{F}_3[t]$. Moreover, $\text{gcd}(R_2,R_3)=\text{gcd}(R_2,R_5)=1$. Thus, the curves $C_t$ and $C_{-t}$ are not isomorphic for every $t\in K$ such that $t(t^2+27)(t^2+243)(t^2+3)(t^4 - 10t^2 + 729)\neq 0$.\\
\textit{Case 2:} If $\text{char}(k)=5$, the polynomials
\begin{align*}
R_2(t)&:=\dfrac{J_4(t)J_2(-t)^2-J_4(-t)J_2(t)^2}{t(t^2+27)^3(t^2+243)(t^2+3)^5},\\
R_3(t)&:=\dfrac{J_6(t)J_2(-t)^3-J_6(-t)J_2(t)^3}{t^5(t^2+27)^3(t^2+243)(t^2+3)^7},\\
R_5(t)&:=\dfrac{J_{10}(t)J_2(-t)^5-J_{10}(-t)J_2(t)^5}{t^5(t^2+27)^5(t^2+243)(t^2+3)^9},
\end{align*}
are defined over  $\mathbb{F}_5[t]$. Moreover, $\text{gcd}(R_2,R_3)=\text{gcd}(R_2,R_5)=1$. Thus, the curves $C_t$ and $C_{-t}$ are not isomorphic for every $t\in K$ such that $t(t^2+27)(t^2+243)(t^2+3)(t^4 - 10t^2 + 729)\neq 0$.\\
\textit{Case 3:} Assume $\text{char}(k)=13$. Define $R_2, R_3$ and $R_5$ as in the general case. On one hand,
\begin{align*}
\text{gcd}(R_2,R_3)=\text{gcd}(R_2,R_5)=(t^4+7t^2+1).
\end{align*}
On the other hand, if $t_0$ is such that $t_0^4+7t_0^2+1=0$, then $C(t_0)$ is isomorphic to 
\begin{equation*}
C: y^2=x^6 + 11x^5 + 7x^4 + 7x^2 + 2x + 1.
\end{equation*}
Thus, the curves $C_t$ and $C_{-t}$ are not isomorphic for every $t\in K$ such that $t(t^2+27)(t^2+243)(t^2+3)(t^4 - 10t^2 + 729)(t^4+7t^2+1)\neq 0$.\\
\textit{Case 4:} Assume $\text{char}(k)=17$. Similar to the case above, one finds
\begin{align*}
\text{gcd}(R_2,R_3)=\text{gcd}(R_2,R_5)=(t^2+7).
\end{align*}
Moreover, if $t_0^2=-7$, then $C(t_0)$ is isomorphic to
\begin{align*}
C: y^2=x^6 + 13x^5 + 13x^4 + 13x^2 + 4x + 1.
\end{align*}
Thus, the curves $C_t$ and $C_{-t}$ are not isomorphic for every $t\in K$ such that $t(t^2+27)(t^2+243)(t^2+3)(t^4 - 10t^2 + 729)(t^2+7)\neq 0$.
\begin{remark}
Roots of the polynomial $t(t^2+27)(t^2-8t+27)$ coincide with zeros of $J_{10}(t)$. It is worth noting that the first, resp. second, factor correspond to a cusp, resp. elliptic point, on the modular curve $X_0(3)$. $E_t$ and $E'_t$ are isomorphic, if $t$ is a  root of $t^2 - 8t^2 + 27$. It is, therefore, plausible to anticipate that for such values, the graph, $G_{\phi_1}$, is a restriction of some isomorphism of elliptic curves, and similarly for $-t$ and $G_{\phi_2}$. However, roots of $(t^2+3)(t^2+243)$ correspond to isomorphic pairs of Curves. One possible explanation for such occurrences, and also for similar occurrences in positive characteristics, is the scenario described before, and exampled by, example \ref{ae2}.
\end{remark}

\section{Proof of Theorem 2}\label{example2}
As in the previous proofs, we denote by
\begin{align*}
&(E_s, E'_s) \text{, a pair of $4$-cyclically-isogeneous elliptic curves,}\\
&j_4(s), \text{ resp. } j'_4(s) \text{, the j-invariant of $E_s$, resp. $E'_s$,}\\
&\Delta_4(s), \text{ resp. } \Delta'_4(s)   \text{, the discriminant of $E_s$, resp. $E'_s$.}\\
\end{align*}
We used Magma Computer Algebra \cite{Mag} to construct a $4$-cyclically-isogenous pair ($E_s$, $E'_s$) elliptic curves, and compute their respective discriminants.

\begin{align*}
j_4(s)=&\dfrac{(s^2-16s+16)^3}{s(s-16)},\\
E_s:y^2 &=  x(x^2 + s(s - 8)x + 16s^2),\\   
\Delta_4(s)=&2^{12}s^7(s-16),\\
j'_4(s)=&\dfrac{(s^2 + 224s + 256)^3}{s(s-16)^4},\\
E'_s:y^2&=  x^3 +
s(s - 8)x^2 -16 s^2(5s + 4)x -64s^3 (s-1)(s+8), \\
\Delta'_4(s)=&2^{12}s^7(s-16)^4.
\end{align*}
Due to the Galois restrictions in the even degree, we require that $s-16$ is a square in $K$. Accordingly, put $s=16t^2+16$. For $i \in \{1, 2, 3 \}$, let $(r_i,0)$ be the three distinct nonzero $2$-torsion points on $E_s$, where $\psi((r_1,0))=O_{E'}$. Let $(s_i,0)$ be the nonzero $2$-torsion points on $E'_s$, enumerated such that $\psi((r_2,0))=\psi((r_3,0))=(s_1,0)$. Using the same notation as in proposition \ref{P10}, and the exposition afterwards. We put
\begin{equation*}
\tilde{\alpha}_i=\alpha_i=r_i,\ \ \ \tilde{\beta}_i=\beta_{\sigma(i)}=s_{\sigma(i)}.
\end{equation*}
where $\sigma$ is the transposition $(23)$ acting on $\{1, 2, 3 \}$. Here we compute the quotient by the groups:
\begin{align*}
G_{\varphi_1}&:=\{(O,O),(P_1,P'_1),(P_2,P'_2),(P_3,P'_3) \}\\
G_{\varphi_2}&:=\{O,O),(P_1,P'_1),(P_2,P'_3),(P_3,P'_2) \}
\end{align*}
where $P_i:=(r_i,0)$ and $P'_i:=(s_i,0)$. Let $w$ be a square root of $t^2+1$.  We used Magma \cite{Mag} to compute the following:
\begin{align*}
&\tilde{\alpha}_1=\alpha_1=0 , \ \ \  \ &\tilde{\beta}_1=\beta_1&==-128(2s^4 + 5s^2 + 3),\\
&\tilde{\alpha}_2=\alpha_2=-64(s^2+1)(2s^2-2sw +1)  , \ \ \ \ &\tilde{\beta}_3=\beta_2&=-128(s^2+1)(4w-1),\\
&\tilde{\alpha}_3=\alpha_3=-64(s^2+1)(2s^2+2sw +1) , \ \ \ &\tilde{\beta}_2=\beta_3&=128(s^2+1)(4w+1).\\
\end{align*}
\begin{align*}
\gamma_{32}&= \dfrac{-4}{t}, \ \ \  \tilde{\gamma}_{32}= \dfrac{4}{t}, \ \ \  \tilde{\gamma}_{21}\tilde{\gamma}_{13}=\gamma_{21}\gamma_{13}=16t^4,\\
\gamma_{21}+\gamma_{13}&=-8(2t^4 - 4t^3 + 5t^2 - 4t + 2), \ \ \  \tilde{\gamma}_{21}+\tilde{\gamma}_{13}=-8(2t^4 + 4t^3 + 5t^2 + 4t + 2),\\
\kappa&=- 2^{172}t^{11}(t-1)^3(t^2+1)^{25}(t^2-t+1)^3,\\
\tilde{\kappa}&=-2^{172}t^{11}(t+1)^3(t^2+1)^{25}(t^2+t+1)^3,
\end{align*}
\begin{align*}
h_t&= 2^{172}t^{10}(t-1)^3(t^2+1)^{25}(t^2-t+1)^3(4x^2+t)(16t^4x^4+8(2t^4 - 4t^3 + 5t^2 - 4t + 2)x^2+1),\\
\tilde{h}_t&=2^{172}t^{10}(-t-1)^3(t^2+1)^{25}(t^2+t+1)^3(4x^2-t)(16t^4x^4+8(2t^4 + 4t^3 + 5t^2 + 4t + 2)x^2+1).
\end{align*}

By proposition \ref{P10}, the hyperelliptic curves $y^2=h_t(x)$ and $y^2=\tilde{h}_t(x)$ have isomorphic unpolarized Jacobian as long as $t(t^2+1)(2t^2+1)(t^2+2)(t^2-1)(t^4+t^2+1)\neq 0$. Let $C_t$ be the genus $2$ hyperelliptic curve defined by the equation
\begin{align*}
C_t: (t^2+1)(t^2-t+1)(t-1)y^2=(4x^2+t)(16t^4x^4+8(2t^4 - 4t^3 + 5t^2 - 4t + 2)x^2+1).
\end{align*}
The curve $y^2=h_t(x)$, resp. $y^2=\tilde{h}_t(x)$, is isomorphic over $K$ to the curve $C_t$, resp. $C_{-t}$. As in the previous two examples, Igusa invariants can be used to investigate the geometric isomorphism of the curves $C_t$ and $C_{-t}$: Let $J_2(t), J_4(t),J_6(t),J_8(t), J_{10}(t) \in  \mathbb{Z}(t) $ be  Igusa invariants of the twist $y^2=(4x^2+t)(16t^4x^4+8(2t^4 - 4t^3 + 5t^2 - 4t + 2)x^2+1)$. The polynomials
\begin{align*}
R_2(t)&:=\dfrac{J_4(t)J_2(-t)^2-J_4(-t)J_2(t)^2}{t(t^2+1)(2t^2+1)(t^2+2)},\\
R_3(t)&:=\dfrac{J_6(t)J_2(-t)^3-J_6(-t)J_2(t)^3}{t^5(t^2+1)(2t^2+1)(t^2+2)},\\
R_5(t)&:=\dfrac{J_{10}(t)J_2(-t)^5-J_{10}(-t)J_2(t)^5}{t^5(t^2+1)^3(2t^2+1)(t^2+2)}.
\end{align*}
are defined over $\mathbb{Z}[t]$. Using Magma \cite{Mag} one finds that, $\text{gcd}(\text{resultant}(R_2,R_3),\text{resultant}(R_2,R_5))$ is divisible exactly by $2,3,5,7,11,23,37$ and $47$. So unless $\text{char}(K)$ is one of those, the curves $C_t$ and $C_{-t}$ are not isomorphic for every $t\in K$ such that $t(t^2+1)(2t^2+1)(t^2+2)(t^2-1)(t^4+t^2+1)\neq 0$.\\
\textit{Case 1:} If $\text{char}(K)=3$, the polynomials
\begin{align*}
R_2(t)&:=\dfrac{J_4(t)J_2(-t)^2-J_4(-t)J_2(t)^2}{t(t^2+1)^3(2t^2+1)^{13}(t^2+2)},\\
R_3(t)&:=\dfrac{J_6(t)J_2(-t)^3-J_6(-t)J_2(t)^3}{t^5(t^2+1)(2t^2+1)^{11}(t^2+2)},\\
R_5(t)&:=\dfrac{J_{10}(t)J_2(-t)^5-J_{10}(-t)J_2(t)^5}{t^5(t^2+1)^3(2t^2+1)^{41}(t^2+2)},
\end{align*}
are defined over $\mathbb{F}_3[t]$. Moreover, $\text{gcd}(R_2,R_3)=\text{gcd}(R_2,R_5)=1$. Thus, the curves $C_t$ and $C_{-t}$ are not isomorphic for every $t\in K$ such that $t(t^2+1)(2t^2+1)(t^2+2)(t^2-1)(t^4+t^2+1)\neq 0$.\\
\textit{Case 2:} If $\text{char}(K)=5$, the polynomials
\begin{align*}
R_2(t)&:=\dfrac{J_4(t)J_2(-t)^2-J_4(-t)J_2(t)^2}{t(t^2+1)^3(2t^2+1)^2(t^2+2)^2(t^4+t^2+1)^2},\\
R_3(t)&:=\dfrac{J_6(t)J_2(-t)^3-J_6(-t)J_2(t)^3}{t^5(t^2+1)^3(2t^2+1)^3(t^2+2)^3(t^4+t^2+1)^2},\\
R_5(t)&:=\dfrac{J_{10}(t)J_2(-t)^5-J_{10}(-t)J_2(t)^5}{t^5(t^2+1)^7(2t^2+1)^5(t^2+2)^5},
\end{align*}
are defined over $\mathbb{F}_5[t]$. Moreover, $\text{gcd}(R_2,R_3)=\text{gcd}(R_2,R_5)=1$. Thus, the curves $C_t$ and $C_{-t}$ are not isomorphic for every $t\in K$ such that $t(t^2+1)(2t^2+1)(t^2+2)(t^2-1)(t^4+t^2+1)\neq 0$.\\
\textit{Case 3:} If $\text{char}(K)=7$, the polynomials
\begin{align*}
R_2(t)&:=\dfrac{J_4(t)J_2(-t)^2-J_4(-t)J_2(t)^2}{t(t^2+1)(2t^2+1)^2(t^2+2)^2},\\
R_3(t)&:=\dfrac{J_6(t)J_2(-t)^3-J_6(-t)J_2(t)^3}{t^5(t^2+1)^3(2t^2+1)^2(t^2+2)^2},\\
R_5(t)&:=\dfrac{J_{10}(t)J_2(-t)^5-J_{10}(-t)J_2(t)^5}{t^5(t^2+1)^3(2t^2+1)^2(t^2+2)^2},
\end{align*}
are defined over $\mathbb{F}_7[t]$. Moreover, $\text{gcd}(R_2,R_3)=\text{gcd}(R_2,R_5)=1$. Thus, the curves $C_t$ and $C_{-t}$ are not isomorphic for every $t\in K$ such that $t(t^2+1)(2t^2+1)(t^2+2)(t^2-1)(t^4+t^2+1) \neq 0$.\\
\textit{Case 4:} If $\text{char}(K)=11$, the polynomials
\begin{align*}
R_2(t)&:=\dfrac{J_4(t)J_2(-t)^2-J_4(-t)J_2(t)^2}{t(t^2+1)(2t^2+1)(t^2+2)(t^4 + 5t^2 + 1)^2},\\
R_3(t)&:=\dfrac{J_6(t)J_2(-t)^3-J_6(-t)J_2(t)^3}{t^5(t^2+1)(2t^2+1)(t^2+2)(t^4 + 5t^2 + 1)^3},\\
R_5(t)&:=\dfrac{J_{10}(t)J_2(-t)^5-J_{10}(-t)J_2(t)^5}{t^5(t^2+1)^3(2t^2+1)(t^2+2)(t^4 + 5t^2 + 1)^5},
\end{align*}
are defined over $\mathbb{F}_{11}[t]$. It turns out that $\text{gcd}(J_2(t),J_2(-t))=t^4 + 5t^2 + 1$. To circumvent this problem, other weighted polynomials can be defined as follows.
\begin{align*}
R_{23}(t)&:=\dfrac{J_4(t)^3J_6(-t)^2-J_4(-t)^3J_6(t)^2}{t^{11}(t^2+1)(2t^2+1)(t^2+2)},\\
R_{35}(t)&:=\dfrac{J_6(t)^5J_{10}(-t)^3-J_6(-t)^5J_{10}^3(t)}{t^{41}(t^2+1)^7(2t^2+1)(t^2+2)},\\
R_{25}(t)&:=\dfrac{J_{10}^2(t)J_4^5(-t)-J_{10}^2(-t)J_4^5(t)}{t^{11}(t^2+1)^5(2t^2+1)(t^2+2)},
\end{align*}
which are all still defined over $\mathbb{F}_{11}[t]$. Moreover, $\text{gcd}(R_{23},R_{35})=\text{gcd}(R_{25},R_{35})=\text{gcd}(R_{23},R_{25})=1$. Thus, the curves $C_t$ and $C_{-t}$ are indeed non-isomorphic for every $t\in K$ such that $t(t^2+1)(2t^2+1)(t^2+2)(t^2-1)(t^4+t^2+1)$.\\
\textit{Case 5:} If $\text{char}(K)=23$, the polynomials
\begin{align*}
R_2(t)&:=\dfrac{J_4(t)J_2(-t)^2-J_4(-t)J_2(t)^2}{t(t^2+1)(2t^2+1)(t^2+2)(t^2 + 13)(t^2+16)},\\
R_3(t)&:=\dfrac{J_6(t)J_2(-t)^3-J_6(-t)J_2(t)^3}{t^5(t^2+1)(2t^2+1)(t^2+2)(t^2 + 13)(t^2+16)},\\
R_5(t)&:=\dfrac{J_{10}(t)J_2(-t)^5-J_{10}(-t)J_2(t)^5}{t^5(t^2+1)^3(2t^2+1)(t^2+2)(t^2 + 13)(t^2+16)},
\end{align*}
are defined over $\mathbb{F}_{23}[t]$. On the other hand, if $t_0^2 \in \{10,7\}$, the  curves $C_t$ and $C_{-t}$ are both isomorphic to the curve
\begin{equation*}
C:y^2=x^6 + x^3 + 2.
\end{equation*}
Thus, the curves $C_t$ and $C_{-t}$ are indeed non-isomorphic for every $t\in K$ such that $t(t^2+1)(2t^2+1)(t^2+2)(t^2-1)(t^4+t^2+1)(t^2+13)(t^2+16)$.\\
\textit{Case 6:} If $\text{char}(K)=37$, the polynomials
\begin{align*}
R_2(t)&:=\dfrac{J_4(t)J_2(-t)^2-J_4(-t)J_2(t)^2}{t(t^2+1)(2t^2+1)(t^2+2)(t^2+5)^2(t^2+15)^2},\\
R_3(t)&:=\dfrac{J_6(t)J_2(-t)^3-J_6(-t)J_2(t)^3}{t^5(t^2+1)(2t^2+1)(t^2+2)(t^2+5)^3(t^2+15)^3},\\
R_5(t)&:=\dfrac{J_{10}(t)J_2(-t)^5-J_{10}(-t)J_2(t)^5}{t^5(t^2+1)^3(2t^2+1)(t^2+2)(t^2+5)^5(t^2+15)^5},
\end{align*}
are defined over $\mathbb{F}_{37}[t]$. Yet again $\text{gcd}(J_2(t),J_2(-t))=(t^2+5)(t^2+15)$. Nonetheless, the polynomials
\begin{align*}
R_{23}(t)&:=\dfrac{J_4(t)^3J_6(-t)^2-J_4(-t)^3J_6(t)^2}{t^{11}(t^2+1)(2t^2+1)(t^2+2)},\\
R_{35}(t)&:=\dfrac{J_6(t)^5J_{10}(-t)^3-J_6(-t)^5J_{10}^3(t)}{t^{41}(t^2+1)^7(2t^2+1)(t^2+2)},\\
R_{25}(t)&:=\dfrac{J_{10}^2(t)J_4^5(-t)-J_{10}^2(-t)J_4^5(t)}{t^{11}(t^2+1)^5(2t^2+1)(t^2+2)},
\end{align*}
are defined over $\mathbb{F}_{37}[t]$. Moreover, $\text{gcd}(R_{23},R_{35})=\text{gcd}(R_{25},R_{35})=\text{gcd}(R_{23},R_{25})=1$. Thus, the curves $C_t$ and $C_{-t}$ are not isomorphic for every $t\in K$ such that $t(t^2+1)(2t^2+1)(t^2+2)(t^2-1)(t^4+t^2+1)\neq 0$.\\
\textit{Case 7:} If $\text{char}(K)=47$, the polynomials
\begin{align*}
R_2(t)&:=\dfrac{J_4(t)J_2(-t)^2-J_4(-t)J_2(t)^2}{t(t^2+1)(2t^2+1)(t^2+2)(t^2+26)(t^2+38)},\\
R_3(t)&:=\dfrac{J_6(t)J_2(-t)^3-J_6(-t)J_2(t)^3}{t^5(t^2+1)(2t^2+1)(t^2+2)(t^2+26)(t^2+38)(t^2+3)(t^2+16)},\\
R_5(t)&:=\dfrac{J_{10}(t)J_2(-t)^5-J_{10}(-t)J_2(t)^5}{t^5(t^2+1)^3(2t^2+1)(t^2+2)(t^2+26)(t^2+38)(t^2+3)(t^2+16)},
\end{align*}
are defined over $\mathbb{F}_{47}[t]$. On the other hand if $t^2 \in \{26,38\}$, the  curves $C_t$ and $C_{-t}$ are both isomorphic to the curve
\begin{equation*}
C:y^2=x^6 + 16x^5 + 41x^4 + 4x^3 + 41x^2 + 16x + 1.
\end{equation*}
Thus, the curves $C_t$ and $C_{-t}$ are not isomorphic for every $t\in K$ such that $t(t^2+1)(2t^2+1)(t^2+2)(t^2-1)(t^4+t^2+1)(t^2+26)(t^2+38)\neq 0$.

\begin{remark}
Roots of the polynomial $t(t^2+1)(t^2-t+1)$ coincide with zeros of $J_{10}(t)$. Once again the first two factors correspond to a cusp on the modular curve $X_0(4)$. The geometric isomorphism classes of $E_t$ and $E'_t$ coincide, if $t$ is a  root of $t^2 - t + 1$.  On the other hand, roots of $(2t^2+1)(t^2+2)$ correspond to isomorphic pairs of Curves.
\end{remark}

\section{Proof of Theorem 3}\label{example3}

Similar to the proof of Theorem 1, we use the parameterization above of the rational curve $X_0(7)$ to construct a one-parameter family of pairs of $3$-isogenous elliptic curves: We denote by
\begin{align*}
&(E_s, E'_s) \text{, a pair of $7$-isogeneous elliptic curves,}\\
&J_7(s), \text{ resp. } j'_7(s) \text{, the j-invariant of $E_s$, resp. $E'_s$,}\\
&\Delta_7(s), \text{ resp. } \Delta'_7(s)   \text{, the discriminant of $E_s$, resp. $E'_s$.}\\
\end{align*}
In the case at hand, \footnote{We used a similar Magma routine as in the proof of theorem $1$.}
\begin{align*}
j_7(s)=&\dfrac{(s^2+13s+49)(s^2+5s+1)^3}{s},\\
E_s:y^2 &= x^3 + \frac{1}{4}(s^4 + 14s^3 + 63s^2 + 70s - 7)x^2 - 36sx -s (s^4 + 14s^3 + 63s^2 + 70s - 7), \\
\Delta_7(s)=&s(s^2 + 5s + 1)^6(s^2 + 13s + 49)^2,\\
j'_7(s)=&\dfrac{(s^2 + 13s + 49)(s^2 + 245s + 2401)^3}{s^7},\\
E'_s:y^2&= x^3+ax^2+bx+c;\\
a:&=\frac{1}{4}(s^4 + 14s^3 + 63s^2 + 70s - 7),\\
b:&=-( 5s^7 + 165s^6 +
    2180s^5 + 14555s^4 + 49820s^3 + 75215s^2 + 25431s + 2450),\\
c:&= -s^{11}
    - 61s^{10} - 1563s^9 - 22420s^8 - 199153s^7 - 1132425s^6 -
    4079892s^5 - 8795374s^4\\
    & \ \ \ \   - 9879408s^3 - 4152015s^2 - 725788s - 45276,\\
\Delta'_7(s)=&s^7(s^2 + 5s + 1)^6(s^2 + 13s + 49)^2.\\
\end{align*}

The Galois restrictions in the odd degree imposes the condition $s=t^2$, for some $t \in K$. Let $r_i$, resp. $s_i$ be the $x$-coordinate of the Weierstrass points of $E_t$, resp. $E'_t$, enumerated such that $\psi((r_i,0))=(s_i,0)$. Using the same notation and method as in the proof of theorem 1, we used Magma \cite{Mag} to compute the following:
\begin{align*}
\gamma_{21}\gamma_{13}\gamma_{32}&=\tilde{\gamma}_{21}\tilde{\gamma}_{13}\tilde{\gamma}_{32}=t^6\\
\gamma_{21}+\gamma_{13}+\gamma_{32}&=\dfrac{1}{16t} (t^8 - 8t^7 + 38t^6 - 128t^5 + 327t^4 - 640t^3 + 910t^2 - 784t - 343), \\
\tilde{\gamma}_{21}+\tilde{\gamma}_{13}+\tilde{\gamma}_{32}&=-\dfrac{1}{16t}(t^8 + 8t^7 + 38t^6 + 128t^5 + 327t^4 + 640t^3 + 910t^2 + 784t - 343) ,\\
\gamma_{21}\gamma_{13}+\gamma_{32}\gamma_{21}+\gamma_{21}\gamma_{13}&=-\dfrac{1}{16t}(t^8 + 16t^7 - 130t^6 + 640t^5 - 2289t^4 + 6272t^3 - 13034t^2 + 19208t -
    16807) ,\\
\tilde{\gamma}_{21}\tilde{\gamma}_{13}+\tilde{\gamma}_{32}\tilde{\gamma}_{21}+\tilde{\gamma}_{21}\tilde{\gamma}_{13}&=\dfrac{1}{16t}(t^8 - 16t^7 - 130t^6 - 640t^5 - 2289t^4 - 6272t^3 - 13034t^2 - 19208t -
    16807) ,\\
    \end{align*}
    \begin{align*}
    \kappa=&2^{-10}t^{17}(t^2 - 5t + 7)^3(t^2 - 3t + 7)^3(t^2 - t + 7)^8(t^2 + t + 7)^8(t^4 + 5t^2 + 1)^{21} ,\\
\tilde{\kappa}=&- 2^{-10}t^{17}(t^2 + 5t + 7)^3(t^2 + 3t + 7)^3(t^2 - t + 7)^8(t^2 + t + 7)^8(t^4 + 5t^2 + 1)^{21},\\
h_t=&2^{-10}t^{16}(t^2 - 5t + 7)^3(t^2 - 3t + 7)^3(t^2 - t + 7)^8(t^2 + t + 7)^8(t^4 + 5t^2 + 1)^{21} \\
&\cdot(16 t^7 x^6 + (t^8 + 16 t^7 - 130 t^6 + 640 t^5 - 2289 t^4 + 6272 t^3 - 13034 t^2
    + 19208 t - 16807) x^4 \\
    &+ (t^8 - 8 t^7 + 38 t^6 - 128 t^5 + 327 t^4 - 640 t^3
    + 910 t^2 - 784 t - 343) x^2 - 16 t),\\
\tilde{h}_t=&2^{-10}t^{16}(t^2 + 5t + 7)^3(t^2 + 3t + 7)^3(t^2 - t + 7)^8(t^2 + t + 7)^8(t^4 + 5t^2 + 1)^{21}  \\
&\cdot(-16 t^7 x^6 + (t^8 - 16 t^7 - 130 t^6 - 640 t^5 - 2289 t^4 - 6272 t^3 - 13034 t^2
    - 19208 t - 16807) x^4 \\
    &+ (t^8 + 8 t^7 + 38 t^6 + 128 t^5 + 327 t^4 + 640 t^3
    + 910 t^2+ 784 t - 343) x^2 + 16 t)
\end{align*}
Let $C_t$ be the genus $2$ hyperelliptic curve defined by the equation
\begin{align*}
C_t:&(t^2-5t+7)(t^2-3t+7)(t^4 + 5t^2 + 1)y^2=\\
&(16 t^7 x^6 + (t^8 + 16 t^7 - 130 t^6 + 640 t^5 - 2289 t^4 + 6272 t^3 - 13034 t^2
    + 19208 t - 16807) x^4 \\
    &+ (t^8 - 8 t^7 + 38 t^6 - 128 t^5 + 327 t^4 - 640 t^3
    + 910 t^2 - 784 t - 343) x^2 - 16 t).\\
\end{align*}
The curve $y^2=h_t(x)$, resp. $y^2=\tilde{h}_t(x)$ is isomorphic to $C_t$, resp. $C_{-t}$, over $K$. As before, the pair $C_t$ and $C_{-t}$ have isomorphic unpolarized Jacobian for any $t \in K$ as long as $ t(t^4+13t^2+49)(t^8 - 6t^6 + 43t^4 - 294t^2 + 2401)(t^4 + 5t^2 + 1)(t^2+7)(t^4 + 245t^2 + 2401)\neq 0.$\\

Finally, let's investigate geometric isomorphism classes of $C_t$ and $C_{-t}$. Let $J_2(t)$, $J_4(t)$, $J_6(t)$, $J_8(t)$,  $J_{10}(t) \in  \mathbb{Z}(t) $ be  Igusa invariants of the twist $y^2=(16 t^7 x^6 + (t^8 + 16 t^7 - 130 t^6 + 640 t^5 - 2289 t^4 + 6272 t^3 - 13034 t^2
    + 19208 t - 16807) x^4 + (t^8 - 8 t^7 + 38 t^6 - 128 t^5 + 327 t^4 - 640 t^3
    + 910 t^2 - 784 t - 343) x^2 - 16 t)$. The polynomials
\begin{align*}
R_2(t)&:=\frac{J_4(t)J_2(-t)^2-J_4(-t)J_2(t)^2}{t (t^4+13 t^2+49)^2 (t^2+7) (t^4 + 5 t^2 + 1) (t^4 + 245 t^2 + 2401)},\\
R_3(t)&:=\frac{J_6(t)J_2(-t)^3-J_6(-t)J_2(t)^3}{t^9 (t^4+13 t^2+49)^2 (t^2+7) (t^4 + 5 t^2 + 1) (t^4 + 245 t^2 + 2401)},\\
R_5(t)&:=\frac{J_{10}(t)J_2(-t)^5-J_{10}(-t)J_2(t)^5}{t^9 (t^4+13 t^2+49)^4 (t^2+7) (t^4 + 5 t^2 + 1) (t^4 + 245 t^2 + 2401)}.
\end{align*}
are defined over $\mathbb{Z}[t]$. Using Magma \cite{Mag} one finds that, $\text{gcd}(\text{resultant}(R_2,R_3),\text{resultant}(R_2,R_5))$ is divisible exactly by $2,3,5,7,13,17,19,41,167$ and $571603$. So unless $\text{char}(K)$ is one of those, the curves $C_t$ and $C_{-t}$ are not isomorphic for every $t\in K$ such that $ t(t^4+13t^2+49)(t^8 - 6t^6 + 43t^4 - 294t^2 + 2401)(t^8+14t^6+63t^4+70t^2-7)(t^4 + 5t^2 + 1)(t^2+7)(t^4 + 245t^2 + 2401)\neq 0$.\\
\textit{Case 1:} If $\text{char}(K)=3$, the polynomials
\begin{align*}
R_2(t)&:=\dfrac{J_4(t)J_2(-t)^2-J_4(-t)J_2(t)^2}{t (t^4+13 t^2+49)^5 (t^2+7)^9 (t^4 + 5 t^2 + 1) (t^4 + 245 t^2 + 2401)},\\
R_3(t)&:=\dfrac{J_6(t)J_2(-t)^3-J_6(-t)J_2(t)^3}{t^9 (t^4+13 t^2+49)^3 (t^2+7)^11 (t^4 + 5 t^2 + 1) (t^4 + 245 t^2 + 2401)},\\
R_5(t)&:=\dfrac{J_{10}(t)J_2(-t)^5-J_{10}(-t)J_2(t)^5}{t^9 (t^4+13 t^2+49)^19 (t^2+7)^29 (t^4 + 5 t^2 + 1) (t^4 + 245 t^2 + 2401)},
\end{align*}
are defined over $\mathbb{F}_3[t]$. Moreover, $\text{gcd}(R_2,R_3)=\text{gcd}(R_2,R_5)=1$. Thus, the curves $C_t$ and $C_{-t}$ are not isomorphic for every $t\in K$ such that $ t(t^4+13t^2+49)(t^8 - 6t^6 + 43t^4 - 294t^2 + 2401)(t^8+14t^6+63t^4+70t^2-7)(t^4 + 5t^2 + 1)(t^2+7)(t^4 + 245t^2 + 2401)\neq 0$.\\
\textit{Case 2:}  If $\text{char}(K)=5$, the polynomials
\begin{align*}
R_2(t)&:=\dfrac{J_4(t)J_2(-t)^2-J_4(-t)J_2(t)^2}{t (t^4+13 t^2+49)^4 (t^2+7) (t^4 + 5 t^2 + 1)^5 (t^4 + 245 t^2 + 2401)},\\
R_3(t)&:=\dfrac{J_6(t)J_2(-t)^3-J_6(-t)J_2(t)^3}{t^9 (t^4+13 t^2+49)^2 (t^2+7) (t^4 + 5 t^2 + 1)^5 (t^4 + 245 t^2 + 2401)},\\
R_5(t)&:=\dfrac{J_{10}(t)J_2(-t)^5-J_{10}(-t)J_2(t)^5}{t^9 (t^4+13 t^2+49)^5 (t^2+7)^3 (t^4 + 5 t^2 + 1)^9 (t^4 + 245 t^2 + 2401)},
\end{align*}
are defined over $\mathbb{F}_5[t]$. Moreover, $\text{gcd}(R_2,R_3)=\text{gcd}(R_2,R_5)=1$. Thus, the curves $C_t$ and $C_{-t}$ are not isomorphic for every $t\in K$ such that $ t(t^4+13t^2+49)(t^8 - 6t^6 + 43t^4 - 294t^2 + 2401)(t^8+14t^6+63t^4+70t^2-7)(t^4 + 5t^2 + 1)(t^2+7)(t^4 + 245t^2 + 2401)\neq 0$.\\
\textit{Case 3:} If $\text{char}(K)=7$, the polynomials
\begin{align*}
R_2(t)&:=\dfrac{J_4(t)J_2(-t)^2-J_4(-t)J_2(t)^2}{t^{21} (t^4+13 t^2+49)^5 (t^4 + 245 t^2 + 2401)},\\
R_3(t)&:=\dfrac{J_6(t)J_2(-t)^3-J_6(-t)J_2(t)^3}{t^{39} (t^4+13 t^2+49)^4 (t^4 + 245 t^2 + 2401)},\\
R_5(t)&:=\dfrac{J_{10}(t)J_2(-t)^5-J_{10}(-t)J_2(t)^5}{t^{67} (t^4+13 t^2+49)^6 (t^4 + 245 t^2 + 2401)},
\end{align*}
are defined over $\mathbb{F}_7[t]$. Moreover, $\text{gcd}(R_2,R_3)=\text{gcd}(R_2,R_5)=1$. Thus, the curves $C_t$ and $C_{-t}$ are not isomorphic for every $t\in K$ such that $ t(t^4+13t^2+49)(t^8 - 6t^6 + 43t^4 - 294t^2 + 2401)(t^8+14t^6+63t^4+70t^2-7)(t^4 + 5t^2 + 1)(t^2+7)(t^4 + 245t^2 + 2401)\neq 0$.\\
\textit{Case 4:} If $\text{char}(K)=13$, the polynomials
\begin{align*}
R_2(t)&:=\dfrac{J_4(t)J_2(-t)^2-J_4(-t)J_2(t)^2}{t (t^4+13 t^2+49)^2 (t^2+7) (t^4 + 5 t^2 + 1) (t^4 + 245 t^2 + 2401) (t^2+6)^2},\\
R_3(t)&:=\dfrac{J_6(t)J_2(-t)^3-J_6(-t)J_2(t)^3}{t^9 (t^4+13 t^2+49)^2 (t^2+7) (t^4 + 5 t^2 + 1) (t^4 + 245 t^2 + 2401) (t^2+6)^2},\\
R_5(t)&:=\dfrac{J_{10}(t)J_2(-t)^5-J_{10}(-t)J_2(t)^5}{t^9 (t^4+13 t^2+49)^5 (t^2+7) (t^4 + 5 t^2 + 1) (t^4 + 245 t^2 + 2401) (t^2+6)^2},
\end{align*}
are defined over $\mathbb{F}_{13}[t]$. On the other hand, if $t^2 =-6$, the  curves $C_t$ and $C_{-t}$ are both isomorphic to the curve
\begin{equation*}
C:y^2 = x^5 + x^3 + 8 x. 
\end{equation*}
Thus, the curves $C_t$ and $C_{-t}$ are not isomorphic for every $t\in K$ such that $ t(t^4+13t^2+49)(t^8 - 6t^6 + 43t^4 - 294t^2 + 2401)(t^8+14t^6+63t^4+70t^2-7)(t^4 + 5t^2 + 1)(t^2+7)(t^4 + 245t^2 + 2401)(t^2+6)\neq 0$.\\
\textit{Case 5:} If $\text{char}(K)=17$, the polynomials
\begin{align*}
R_2(t)&:=\dfrac{J_4(t)J_2(-t)^2-J_4(-t)J_2(t)^2}{t (t^4+13 t^2+49)^2 (t^2+7) (t^4 + 5 t^2 + 1) (t^4 + 245 t^2 + 2401) (t^4 + 11 t^2 + 15) (t^4 + 7 t^2 + 15)},\\
R_3(t)&:=\dfrac{J_6(t)J_2(-t)^3-J_6(-t)J_2(t)^3}{t^9 (t^4+13 t^2+49)^2 (t^2+7) (t^4 + 5 t^2 + 1) (t^4 + 245 t^2 + 2401) (t^4 + 11 t^2 + 15) (t^4 + 7 t^2 + 15)},\\
R_5(t)&:=\dfrac{J_{10}(t)J_2(-t)^5-J_{10}(-t)J_2(t)^5}{t^{11} (t^4+13 t^2+49)^4 (t^2+7) (t^4 + 5 t^2 + 1) (t^4 + 245 t^2 + 2401) (t^4 + 11 t^2 + 15) (t^4 + 7 t^2 + 15)},
\end{align*}
are defined over $\mathbb{F}_{17}[t]$. On the other hand if $(t^4 + 11 t^2 + 15) (t^4 + 7 t^2 + 15)=0$, the  curves $C_t$ and $C_{-t}$ are both isomorphic to the curve
\begin{equation*}
C:y^2 = x^5 + x^3 + 7 x .
\end{equation*}
Thus, the curves $C_t$ and $C_{-t}$ are not isomorphic for every $t\in K$, such that $ t(t^4+13t^2+49)(t^8 - 6t^6 + 43t^4 - 294t^2 + 2401)(t^8+14t^6+63t^4+70t^2-7)(t^4 + 5t^2 + 1)(t^2+7)(t^4 + 245t^2 + 2401)(t^4 + 11 t^2 + 15) (t^4 + 7 t^2 + 15)\neq 0$.\\
\textit{Case 6:} If $\text{char}(K)=19$, the polynomials
\begin{align*}
R_2(t)&:=\dfrac{J_4(t)J_2(-t)^2-J_4(-t)J_2(t)^2}{t (t^4+13 t^2+49)^2 (t^2+7)^3 (t^4 + 5 t^2 + 1) (t^4 + 245 t^2 + 2401)},\\
R_3(t)&:=\dfrac{J_6(t)J_2(-t)^3-J_6(-t)J_2(t)^3}{t^9 (t^4+13 t^2+49)^2 (t^2+7)^3 (t^4 + 5 t^2 + 1) (t^4 + 245 t^2 + 2401)},\\
R_5(t)&:=\dfrac{J_{10}(t)J_2(-t)^5-J_{10}(-t)J_2(t)^5}{t^9 (t^4+13 t^2+49)^4 (t^2+7)^5 (t^4 + 5 t^2 + 1) (t^4 + 245 t^2 + 2401)},
\end{align*}
are defined over $\mathbb{F}_{19}[t]$. Moreover, $\text{gcd}(R_2,R_3)=\text{gcd}(R_2,R_5)=1$. Thus, the curves $C_t$ and $C_{-t}$ are not isomorphic for every $t\in K$ such that $ t(t^4+13t^2+49)(t^8 - 6t^6 + 43t^4 - 294t^2 + 2401)(t^8+14t^6+63t^4+70t^2-7)(t^4 + 5t^2 + 1)(t^2+7)(t^4 + 245t^2 + 2401)\neq 0$.\\
\textit{Case 7:}  If $\text{char}(K)=41$, the polynomials
\begin{align*}
R_2(t)&:=\dfrac{J_4(t)J_2(-t)^2-J_4(-t)J_2(t)^2}{t (t^4+13 t^2+49)^2 (t^2+7) (t^4 + 5 t^2 + 1) (t^4 + 245 t^2 + 2401) (t^4 + 26 t^2 + 8)},\\
R_3(t)&:=\dfrac{J_6(t)J_2(-t)^3-J_6(-t)J_2(t)^3}{t^9 (t^4+13 t^2+49)^2 (t^2+7) (t^4 + 5 t^2 + 1) (t^4 + 245 t^2 + 2401) (t^4 + 26 t^2 + 8)},\\
R_5(t)&:=\dfrac{J_{10}(t)J_2(-t)^5-J_{10}(-t)J_2(t)^5}{t^9 (t^4+13 t^2+49)^4 (t^2+7) (t^4 + 5 t^2 + 1) (t^4 + 245 t^2 + 2401) (t^4 + 26 t^2 + 8)},
\end{align*}
are defined over $\mathbb{F}_{47}[t]$. On the other hand, if $(t^4 + 26 t^2 + 8)=0$, the  curves $C_t$ and $C_{-t}$ are both isomorphic to the curve
\begin{equation*}
C:y^2 = x^5 + x^3 + 14 x.
\end{equation*}
Thus, the curves $C_t$ and $C_{-t}$ are not isomorphic for every $t\in K$ such that $ t(t^4+13t^2+49)(t^8 - 6t^6 + 43t^4 - 294t^2 + 2401)(t^8+14t^6+63t^4+70t^2-7)(t^4 + 5t^2 + 1)(t^2+7)(t^4 + 245t^2 + 2401)(t^4 + 26 t^2 + 8)\neq 0$.\\
\textit{Case 8:} If $\text{char}(K)=167$, the polynomials
\begin{align*}
R_2(t)&:=\dfrac{J_4(t)J_2(-t)^2-J_4(-t)J_2(t)^2}{t (t^4+13 t^2+49)^2 (t^2+7)^3 (t^4 + 5 t^2 + 1) (t^4 + 245 t^2 + 2401)},\\
R_3(t)&:=\dfrac{J_6(t)J_2(-t)^3-J_6(-t)J_2(t)^3}{t^9 (t^4+13 t^2+49)^2 (t^2+7)^3 (t^4 + 5 t^2 + 1) (t^4 + 245 t^2 + 2401)},\\
R_5(t)&:=\dfrac{J_{10}(t)J_2(-t)^5-J_{10}(-t)J_2(t)^5}{t^9 (t^4+13 t^2+49)^4 (t^2+7)^5 (t^4 + 5 t^2 + 1) (t^4 + 245 t^2 + 2401)},
\end{align*}
are defined over $\mathbb{F}_{167}[t]$. Moreover, $\text{gcd}(R_2,R_3)=\text{gcd}(R_2,R_5)=1$. Thus, the curves $C_t$ and $C_{-t}$ are not isomorphic for every $t\in K$ such that $ t(t^4+13t^2+49)(t^8 - 6t^6 + 43t^4 - 294t^2 + 2401)(t^8+14t^6+63t^4+70t^2-7)(t^4 + 5t^2 + 1)(t^2+7)(t^4 + 245t^2 + 2401)\neq 0$.\\
\textit{Case 9:}  If $\text{char}(K)=571603$, the polynomials
\begin{align*}
R_2(t)&:=\dfrac{J_4(t)J_2(-t)^2-J_4(-t)J_2(t)^2}{t (t^4+13 t^2+49)^2 (t^2+7) (t^4 + 5 t^2 + 1) (t^4 + 245 t^2 + 2401) (t^4 + 516817 t^2 + 49)},\\
R_3(t)&:=\dfrac{J_6(t)J_2(-t)^3-J_6(-t)J_2(t)^3}{t^9 (t^4+13 t^2+49)^2 (t^2+7) (t^4 + 5 t^2 + 1) (t^4 + 245 t^2 + 2401) (t^4 + 516817 t^2 + 49)^2},\\
R_5(t)&:=\dfrac{J_{10}(t)J_2(-t)^5-J_{10}(-t)J_2(t)^5}{t^9 (t^4+13 t^2+49)^4 (t^2+7) (t^4 + 5 t^2 + 1) (t^4 + 245 t^2 + 2401) (t^4 + 516817 t^2 + 49)^4},
\end{align*}
are defined over $\mathbb{F}_{571603}[t]$. Moreover, $\text{gcd}(R_2,R_3)=\text{gcd}(R_2,R_5)=1$. Thus, the curves $C_t$ and $C_{-t}$ are not isomorphic for every $t\in K$ such that $ t(t^4+13t^2+49)(t^8 - 6t^6 + 43t^4 - 294t^2 + 2401)(t^8+14t^6+63t^4+70t^2-7)(t^4 + 5t^2 + 1)(t^2+7)(t^4 + 245t^2 + 2401)\neq 0$.\\

\begin{remark}
Roots of the polynomial $t(t^2-3t+7)(t^2-5t+7)(t^4+13t^2+49)$ coincide with zeros of $J_{10}(t)$. In this case the first, resp. second, factor correspond to a cusp, resp. elliptic point, on the modular curve $X_0(7)$. The geometric isomorphism classes of $E_t$ and $E'_t$ coincide, if $t$ is a  root of $(t^2-3t+7)(t^2-5t+7)(t^2+7)$. Nonetheless, roots of $(t^2+7)(t^4+5t^2+1)(t^4+245t^2+2401)$ correspond to well defined, yet isomorphic pairs of Curves.
\end{remark}

\section{Obstruction to constructing further examples over $\mathbb{Q}$}\label{s2o}

One might be tempted to replicate the above construction to isogenies of higher degrees. Unfortunately, an obstruction emerges from the imposed Galois restrictions, when considering a pair of $n$-cyclically-isogenous elliptic curves for $n \geq 5$, except for $n=7$. Recall that for $n$ odd, the Galois restriction took the form of a requirement on one- hence on both- elliptic curve to have a square discriminant. For even degrees, the isogenous elliptic curves were required to have discriminants varying by a square. It turns out that these restrictions are far too strong over $\mathbb{Q}$, when $n \notin \{ 2, 3, 4, 7\}$. Let's first investigate such restrictions for odd degrees.

\textit{Degree 5:}
For a number field $K$, and $s \in K$, the isomorphism class corresponding to the $j$-invariant
 \begin{equation*}
j_5(s)=\dfrac{(s^2+10s+5)^3}{s}
\end{equation*}
contains an elliptic curve, which can be defined over $K$, and admits a $K$-rational cyclic isogeny of degree 5 that can be also defined over $K$. Bear in mind that the discriminants differ by a square for isomorphic elliptic curves. We used Magma's function \textsf{ EllipticCurveFromjInvariant } to compute an elliptic curve in that isomorphism class, and the magma function  \textsf{ Discriminant } to compute its discriminant, $\Delta_5(s)$. Up to a square,
\begin{equation*}
\Delta_5(s)=s^2 + 22s + 125.
\end{equation*}
Values for $s$ for which $\Delta_5(s)$ is a square are in one to one correspondence with the rational points on the elliptic curve
\begin{equation*}
O_5:y^2=x^3 + 22x^2 + 125x.
\end{equation*}
$O_5(\mathbb{Q})$ contains only one rational point, namely $(0,0)$.\\

In what follows, the same procedure and similar notation will be used, while varying the degree from $5$ to $n \in \{9,13,25 \}$.\\

\textit{Degree 9:}
In the isomorphism class
\begin{equation*}
j_9(s)=\dfrac{(s+3)^3(s^3+9s^2+27s+3)^3}{s(s^2+9s+27)},
\end{equation*}
an elliptic curve, up to a square, has discriminant:
\begin{equation*}
\Delta_9(s)=s(s^2 + 9s + 27).
\end{equation*}
Values for $s$ for which $\Delta_9(s)$ is a square are in one to one correspondence with the rational points on the elliptic curve
\begin{equation*}
O_9:y^2=x^3 + 9x^2 + 27x.
\end{equation*}
$O_9(\mathbb{Q})$ contains only one rational point, namely $(0,0)$.\\

\textit{Degree 13:}
In the isomorphism class
\begin{equation*}
j_{13}(s)=\dfrac{(s^2+5s+13)(s^4+7s^3+20s^2+19s+1)^3}{s},
\end{equation*}
an elliptic curve, up to a square, has discriminant:
\begin{equation*}
\Delta_{13}(s)=s(s^2 + 6s + 13).
\end{equation*}
Values for $s$ for which $\Delta_{13}(s)$ is a square are in one-to-one correspondence with the rational points on the elliptic curve
\begin{equation*}
O_{13}:y^2=x^3 + 6x^2 + 13x.
\end{equation*}
$O_{13}(\mathbb{Q})$ contains only one rational point, namely $(0,0)$.\\

\textit{Degree 25:}
In the isomorphism class
\begin{equation*}
j_{25}(s)=\dfrac{(s^{10}+10s^9+55s^8+200s^7+525s^6+1010s^5+1425s^4+1400s^3+875s^2+250s+5)^3}{s(s^4+5s^3+15s^2+25s+25)},
\end{equation*}
an elliptic curve, up to a square, has discriminant:
\begin{align*}
\Delta_{25}(s)=&s(s^4 + 5s^3 + 15s^2 + 25s + 25)(s^2 + 2s + 5).
\end{align*}
Values for $s$ for which $\Delta_{25}(s)$ is a square are in one-to-one correspondence with the rational points on the genus $3$ hyperelliptic curve
\begin{equation*}
O_{25}:y^2=x(x^4 + 5x^3 + 15x^2 + 25x + 25)(x^2 + 2x + 5).
\end{equation*}
By Faltings theorem, see \cite{Falt}, the hyperelliptic curve $O_{25}$ has only finitely many rational points over $\mathbb{Q}$. \\

In considering Galois restrictions for isogenies of even degree, it is necessary to compute both isogenous curves in order to compare their discriminants. Since the discriminant vary by a square within the same geometric isomorphism class, it suffices to compute any pair of elliptic curves in the isomorphism classes $j_n$ and $j'_n$, for $n \in \{6,8,10,12,16,18\}$.\\

\textit{Degree 6:}
As before, we used Magma, in order to compute a pair of elliptic curves in the geometric isomorphism classes
\begin{equation*}
j_{6}(s)=\dfrac{(s+6)^3(s^3+18s^2+84s+24)^3}{s(s+8)^3(s+9)^2}, \  j'_{6}(s)=\dfrac{(s+12)^3(s^3+252s^2+3888s+15552)^3}{s^6(s+8)^2(s+9)^3}.
\end{equation*}
Up to a square, the respective discriminants of which turned out to be 
\begin{equation*}
\Delta_{6}(s)=s(s+8), \ \Delta'_{6}(s)=(s+9).
\end{equation*}
Values for $s$ for which $\Delta_{6}(s)$ differs from $\Delta'_{6}(s)$  by a square are in one-to-one correspondence with the rational points on the elliptic curve
\begin{equation*}
O_{6}:y^2=x^3 + 17x^2 + 72x.
\end{equation*}
$O_{6}(\mathbb{Q})$ contains only three rational points, namely $(0,0),(-8,0),(-9,0)$.\\

\textit{Degree 8:}
For the pair of geometric isomorphism classes
\begin{equation*}
j_{8}(s)=\dfrac{(s^4+16s^3+80s^2+128s+16)^3}{s(s+4)^2(s+8)}, \ j'_{8}(s)=\dfrac{(s^4+256s^3+5120s^2+32768s+65536)^3}{s^8(s+4)(s+8)^2},
\end{equation*}
a pair elliptic curves, up to a square, have discriminant:
\begin{equation*}
\Delta_{8}(s)=s(s+8), \ \Delta'_{8}(s)=(s+4).
\end{equation*}

Values for $s$ for which $\Delta_{8}(s)$ differs from $\Delta'_{8}(s)$  by a square are in one-to-one correspondence with the rational points on the elliptic curve
\begin{equation*}
O_{8}:y^2=x^3 + 12x^2 + 32x.
\end{equation*}
$O_{8}(\mathbb{Q})$ contains only three rational points, namely $(0,0),(-4,0),(-8,0)$. \\

\textit{Degree 10:}
For the pair of geometric isomorphism classes
\begin{align*}
j_{10}(s)&=\dfrac{(s^6+20s^5+160s^4+640s^3+1280s^2+1040s+80)^3}{s(s+4)^5(s+5)^2},\\
j'_{10}(s)&=\dfrac{(s^6+260 s^5+6400 s^4+64000 s^3+320000 s^2+800000 s+800000)^3}{ s^{10} (s+4)^2 (s+5)^5},
\end{align*}
a pair elliptic curves, up to a square, have discriminant:
\begin{equation*}
\Delta_{10}(s)=\dfrac{s(s+4)}{s^2 + 8s + 20}, \  \Delta'_{10}(s)=\dfrac{(s+5)}{s^2 + 8 s + 20}.
\end{equation*}
Values for $s$ for which $\Delta_{10}(s)$ differs from $\Delta'_{10}(s)$  by a square are in one-to-one correspondence with the rational points on the elliptic curve
\begin{equation*}
O_{10}:y^2=x^3 + 9x^2 + 20x.
\end{equation*}
$O_{10}(\mathbb{Q})$ contains only three rational points, namely $(0,0),(-4,0),(-5,0)$. \\

\textit{Degree 12:}
For the pair of geometric isomorphism classes

\begin{align*}
j_{12}(s)&=\dfrac{(s^2+6 s+6)^3 (s^6+18 s^5+126 s^4+432 s^3+732 s^2+504 s+24)^3}{s (s+2)^3 (s+3)^4 (s+4)^3 (s+6)},\\
j'_{12}(s)&=\dfrac{(s^2+12 s+24)^3 (s^6+252 s^5+4392 s^4+31104 s^3+108864 s^2+186624 s+124416)^3}{ s^{12} (s+2) (s+3)^3 (s+4)^4 (s+6)^3},
\end{align*}
a pair elliptic curves, up to a square, have discriminant:
\begin{equation*}
\Delta_{12}(s)=s(s+2)(s+4)(s+6), \ \Delta'_{12}(s)=(s+2)(s+3)(s+6).
\end{equation*}

Values for $s$ for which $\Delta_{12}(s)$ differs from $\Delta'_{12}(s)$  by a square are in one-to-one correspondence with the rational points on the elliptic curve
\begin{equation*}
O_{12}:y^2=x^3 + 7x^2 + 12x.
\end{equation*}
$O_{12}(\mathbb{Q})$ contains only three rational points, namely $(0,0),(-3,0),(-4,0)$.\\

\textit{Degree 16:}
For the pair of geometric isomorphism classes
\begin{align*}
&j_{16}(s)=\dfrac{(s^8+16 s^7+112 s^6+448 s^5+1104 s^4+1664 s^3+1408 s^2+512 s+16)^3}{s (s+2)^4 (s+4) (s^2+4 s+8)},\\
&j'_{16}(s)=\frac{(s^8+256 s^7+5632 s^6+53248 s^5+282624 s^4+917504 s^3+1835008 s^2+2097152 s+1048576)^3}{s^{16} (s+2) (s+4)^4 (s^2+4 s+8)},
\end{align*}
a pair elliptic curves, up to a square, have discriminant:
\begin{equation*}
\Delta_{16}(s)=s(s+4)(s^2 + 4 s + 8), \ \Delta'_{16}(s)=(s+2)(s^2 + 4 s + 8).
\end{equation*}

Values for $s$ for which $\Delta_{16}(s)$ differs from $\Delta'_{16}(s)$  by a square are in one-to-one correspondence with the rational points on the elliptic curve
\begin{equation*}
O_{16}:y^2=x^3 + 6x^2 + 8x.
\end{equation*}
$O_{16}(\mathbb{Q})$ contains only three rational points, namely $(0,0),(-2,0),(-4,0)$. \\

\textit{Degree 18:}
For the pair of geometric isomorphism classes
\begin{align*}
j_{18}(s)=&(s^3+6 s^2+12 s+6)^3 \\
&\cdot(s^9+18 s^8+144 s^7+666 s^6+1944 s^5+3672 s^4+4404 s^3+3096 s^2+1008 s+24)^3\\\
 &\cdot s^{-1} (s+2)^{-9} (s+3)^{-2} (s^2+3 s+3)^{-2} (s^2+6 s+12)^{-1}, \\
 j'_{18}(s)=&(s^3+12 s^2+36 s+36)^3 \\
& \cdot(s^9+252 s^8+4644 s^7+39636 s^6+198288 s^5+629856 s^4+1294704 s^3
+1679616 s^2\\
&+1259712 s+419904)^3 \cdot s^{-18} (s+2)^{-2} (s+3)^{-9} (s^2+3 s+3)^{-1} (s^2+6 s+12)^{-2}.
\end{align*}
a pair elliptic curves, up to a square, have discriminant:
\begin{equation*}
\Delta_{18}(s)=s(s+2)(s^2 + 6 s + 12), \  \Delta'_{18}(s)= (s+3)(s^2 + 3 s + 3).
 \end{equation*}

Values for $s$ for which $\Delta_{18}(s)$ differs from $\Delta'_{18}(s)$  by a square are in one-to-one correspondence with the rational points on the genus $3$ hyperelliptic curve
\begin{equation*}
O_{18}:y^2=x(x+2)(x+3)(x^2+3x+3)(x^2+6x+12).
\end{equation*}
But, $O_{18}(\mathbb{Q})$ contains only finitely many points by Faltings theorem, see \cite{Falt}.

\bibliographystyle{alpha}
\bibliography{bib}
 
\end{document}